\theoremstyle{plain}             
\newtheorem{theorem}{Theorem}[section]
\newtheorem{proposition}[theorem]{Proposition}
\newtheorem{corollary}[theorem]{Corollary}
\theoremstyle{remark}            
\newtheorem{remark}[theorem]{Remark}
\title{Convergence of generalized cross-validation with applications to ill-posed
integral equations\thanks{Submitted to the editors DATE.}
\thanks{Funded  by  the  Deutsche  Forschungsgemeinschaft under Germany's
Excellence Strategy  - EXC-
2046/1, Projekt-ID 390685689 (The Berlin Mathematics Research Center MATH+, Project AA5-11)}}
\author{
	Tim Jahn\thanks{Institut f\"ur Mathematik, Technische Universit\"at Berlin,
	Germany (\texttt{jahn@tu-berlin.de}).}\and
    Mikhail Kirilin\thanks{Institut f\"ur Mathematik, Technische Universit\"at
    Berlin, Germany (\texttt{mkirilin@tu-berlin.de}).}
}
\date{}
\newcommand{\R}{\mathbb{R}}
\newcommand{\N}{\mathbb{N}}
\newcommand{\E}{\mathbb{E}}
\newcommand{\omd}{\chi_{\Omega_{t}}}
\newcommand{\omk}{\chi_{\Omega_t}}
\newcommand{\omkp}{\chi_{\Omega_t'}}
\begin{document}

	\maketitle

\begin{abstract}

In this article, we rigorously establish the consistency of generalized
cross-validation as a parameter-choice rule for solving inverse problems. We
prove that the index chosen by leave-one-out GCV achieves a non-asymptotic,
order-optimal error bound
with high probability for polynomially ill-posed compact operators. Hereby it is
remarkable that the unknown true solution need not satisfy a
self-similarity condition, which is generally needed for other heuristic
parameter choice rules. We quantify the rate and demonstrate convergence
numerically on integral equation test cases, including image deblurring and CT
reconstruction.


\end{abstract}

\noindent{\it Keywords\/}:
statistical inverse problems, generalized cross-validation, consistency, error
estimates

\section{Introduction}\label{s1}

Generalized cross-validation (GCV) is a widely adopted parameter-selection
criterion for the regularized solution of ill-posed inverse problems. It is based
on splitting the data set into two parts: the first part is used to construct
a solution candidate for the task, while the second part is used to validate the
performance of the candidate. For a classic reference, see Stone
\cite{stone1974cross}. For a more recent one, see Hastie et al.
\cite{hastie2009elements} and Arlot \& Celisse \cite{arlot2010survey}. The
generalized cross-validation technique analyzed in this study is rooted in the
seminal work of Wahba and Craven \cite{craven1978smoothing}. They employed this
technique for the purpose of spline smoothing of noisy point evaluations of a
function. A salient feature of the rule is that it does not presuppose knowledge
of the noise level or the smoothness of the unknown function. In its original
form, the leave-one-out  method involves the estimation of a spline fit
for
all data points except for one, with the error of the unconsidered datum serving
as the quality criterion. There one varies a so-called smoothing parameter to
balance how well the candidate fits the data points with the norm of the
candidate. This ultimately results in a minimization problem over the smoothing
parameter. In the context of ill-posed integral equations, this
method has been employed for the purpose of selecting the regularization
parameter by Wahba \cite{wahba1977practical}, Vogel \cite{vogel1986optimal},
Lukas \cite{lukas1993asymptotic} and others. The scope of applications of GCV
and its variants have expanded, solidifying their position as prominent
 methods in the fields of high-dimensional statistics, data science,
and machine learning (see Witten \& Frank \cite{witten2002data}; Kuhn \&
Johnson\cite{kuhn2013applied}; Giraud\cite{giraud2021introduction}). Given the
significance of GCV as a practical rule in these areas, the present article
clarifies the theoretical properties of the original method.

In general, two types of convergence results for cross-validation are
distinguished. The vast majority is of weak type. There, the focus is on the
properties of the minimizer of the population counterpart of the random
data-driven functional rather than on the properties of the minimizer of the
(random) data-driven functional itself. While convergence results for minimizers
of the expected value offer valuable insight into the problem, from a statistical
perspective, they do not guarantee consistency of the original method. To date,
there are no substantial convergence results for GCV in the context of ill-posed
integral equations. Given the inherent instability of ill-posed integral
equations, this gap is unsatisfactory. The primary contribution of this
manuscript is an analysis of the convergence of GCV for ill-posed inverse
problems of strong type. Here, the properties of the minimizer of the
random data-driven functional are studied.

In the context of spline smoothing and model selection, Speckman
\cite{speckman1985spline} and Li \cite{li1986asymptotic, li1987asymptotic} have
obtained analogous results. Furthermore, Caponnetto and Yao's work on
semi-supervised statistical learning has yielded a consistency result
\cite{caponnetto2010cross}. Nevertheless, we will not employ the approach
proposed by Li, which is predicated on a comparison to Stein estimators.
Consequently, the result obtained will not be a straightforward generalization of
the approach from Li, but rather will take a different form. For instance, Li
demonstrated that generalized cross-validation is asymptotically optimal for
model selection, as the discretization dimension tends to infinity, while the
noise level $\delta$ , and the smoothness of the exact solution are kept fixed.
We show that generalized cross-validation is order-optimal for abstract mildly ill-posed
inverse problems, uniformly over $\delta$ and the smoothness of the true solution. Hereby a remarkable fact is that we do not have to assume self-similarity of the solution, see the next section for a more detailed dicussion of this point. While order-optimality is slightly weaker than asymptotic
optimality, it is generally the best that can be achieved for inverse problems.
Furthermore, the established bound is non-asymptotical. We then apply the result to an explicit ill-posed integral equation, analyzing in detail also the 
discretization error, a step that is frequently omitted.
The integral equation is formulated in an inherently infinite-dimensional
setting; however, through the finite number of measurement points, a semi-discrete
model is induced. We emphasize that the cross-validation method can only be formulated
in semi-discrete setting, and in most works, no error estimates of the
constructed estimator with respect to the continuous solution are provided.
In addition, we conduct  numerical experiments. First, we
recover a photograph that has been convolved with a Gaussian point-spread
function and subjected to noise addition. We demonstrate that the relative error
produced by the GCV-driven spectral cut-off converges monotonically as the noise
level decreases. A distinct plateau emerges solely for the minimal noise values,
thereby substantiating that the discretization error we previously analyzed
for one-dimensional integral equations, begins to dominate. In a subsequent
experiment, we implemented the GCV methodology in the context of parallel-beam
computed tomography. This approach yielded results in which the reconstruction
error and the selected truncation index remained within a few percent of the
optimal values.

\section{Setting and main result}

We consider the prototypical linear inverse problem
\begin{equation}\label{s1:e1}
  Kf = g.
\end{equation}
Here $K\in\mathcal L(X,Y)$ is a bounded, compact operator acting between real
Hilbert spaces $X$ and $Y$.
%
%
We have access to noisy observations
\begin{equation}\label{s1:e2}
g^\delta_{j,m}:=g^\dagger_{j,m}+\delta\varepsilon_j, \quad j=1,...,m,
\quad \mbox{and}\quad g_{m}^{\delta}:=(g_{j,m}^{\delta})_{j=1}^m,
\end{equation}
where $g^\dagger=Kf^\dagger$ is the unknown exact data,
$ g^\dagger_{j,m}=(P_mg^\dagger)_j, j=1,...,m$ are discretized not directly accessible
observations of $g^\dagger$ obtained by a bounded linear measurement operator
$P_m:\mathcal{R}(K)\to \R^m$, 
$\delta>0$ is the noise
level and
$\varepsilon_j$
are unbiased i.i.d random variables
with unit variance. The goal is to reconstruct the exact solution $f^\dagger$.
%
%
In the following, we assume that $K_m$ is injective for all $m\in\N$ and that $K$
is injective with dense range. Consequently, problem \eqref{s1:e1} is ill-posed
and therefore requires regularisation. To that end, we employ spectral methods
that utilize the spectral decomposition of the induced discretization of $K$. We
denote this discretization by $K_m$ and define it as follows:
%
%
\begin{align}\label{main:map}
  K_m: X &\;\rightarrow\;\R^m,\\
  f      &\;\mapsto\;\bigl(P_mKf\bigr)_{j=1}^m.
\end{align}
%
%
%
%
Denote by $(\sigma_{j,m},v_{j,m},u_{j,m})_{j=1}^m$ the singular value
decomposition of the compact operator $K_m$, i.e., $\sigma_{1,m}\ge ... \ge
\sigma_{m,m}>0$, $K_m v_{j,m}=\sigma_{j,m} u_{j,m}, K_m^*u_{j,m}=\sigma_{j,m}
v_{j,m}$ and $(v_{j,m})_{j=1}^m \subset \mathcal{N}(K_m)^\perp\subset X$ and
$(u_{j,m})_{j=1}^m \subset \R^m$ are orthonormal bases. The spectral cut-off
approximation to the unknown $f^\dagger$  is given by:
\begin{equation}
	f^\delta_{k,m}:= \sum_{j=1}^k \frac{\left(g^\delta_m,u_{j,m}\right)_{\R^m}}{\sigma_{j,m}} v_{j,m}
\end{equation}
and the ultimate goal will be to determine the truncation index $k\le m$ dependent only on $m$ (and without knowledge of $\delta$ or assumptions on the smoothness of $f^\dagger$). For the determination of the truncation index $k$ we choose generalized cross-validation due to Wahba. It is defined as follows:
\begin{align}
	k_m=k_{{\rm gcv},m}=k_m(\delta,f^\dagger,g^\delta_m)=\arg\min_{k=0,...,\frac{m}{2}}\frac{\sum_{j=k+1}^m(g^\delta_m,u_{j,m})_{\R^m}^2}{\left(1-\frac{k}{m}\right)^2}=:\arg\min_{k=0,...,\frac{m}{2}} \Psi_m(k).
\end{align}
This choice was introduced by Vogel \cite{vogel1986optimal} and can be derived
from the original leave-one-out method from Wahba \cite{wahba1977practical}, when
Tikhonov regularization is replaced with spectral cut-off regularization. The
only distinction from\cite{vogel1986optimal} is the restriction of the minimizing
set to $k\le m/2$ rather than $k\le m$. Other choices, such as $k\le
\frac{2}{3}m$, are also viable. In \cite{vogel1986optimal} such restriction
was unnecessary, as the expectation of the functional was considered.  I.e.,
there and in many other works the convergence analysis is not carried out for
$k_m$, but rather
$k_m^*=\arg\min_{k}\E[\Psi_m(k)]$. The results are usually that $k_m^*=(1+o(1))
\arg\min_{k}\E[S_m(k)]$ (as $m\to\infty$), where $S_m(k):=\E\|K_mf_{k,m}^\delta - K_mf^\dagger\|_{\R^m}^2]$ (that is, $k_m^*$ minimizes the weak (also called prediction) error),
under certain assumptions on the
singular value decomposition of $K, K_m$ and $f^\dagger$, and the constants
hidden in $o(1)$ are not given or unknown
\cite{Ju2021asymptotic, patil2024asymptotically}. In this
article we will investigate the strong error, i.e., with respect to $f^\dagger$,
under the truly data-driven choice $k_m$, and we will exactly
calculate all involved constants. In order to control the stochastic oscillations we set
\begin{equation*}
 p_{\varepsilon}(t):=\frac{3}{\varepsilon}\E\left[\left|\frac{1}{t}\sum_{j=1}^t(\varepsilon_j^2-1)\right|\right], \quad t\in\N, \quad \varepsilon\le \frac{1}{12}.
\end{equation*}
 Clearly, since the $\varepsilon_j$'s are unbiased with unit variance, we have
 $p_{\varepsilon}(t)\to0$ as $t\to\infty$. The rate of convergence can be
 controlled under additional assumptions, e.g., $p_\varepsilon(t) \le
 C/\sqrt{t}$, if $\E[\varepsilon_j^4] < \infty$.  In order to formulate our main
 result, we define the so-called weak and strong oracles for each
 individual $f^\dagger$:
\begin{align}\label{ow}
	t^\delta_m:&=t^\delta_m(f^\dagger):=\max\left\{0\le k\le m  ~:~ k \delta^2 \le \sum_{j=k+1}^m\sigma_{j,m}^2(f^\dagger,v_{j,m})^2\right\},\\\label{os}
	s^\delta_m:&= s^\delta_m(f^\dagger):=\max\left\{0 \le k \le m~:~\frac{k \delta^2}{\sigma_{k,m}^2} \le \sum_{j=k+1}^m(f^\dagger,v_{j,m})^2\right\}.
\end{align}
Since $\sigma_{j,m}\ge \sigma_{j+1,m}$ for all $j=1,..,m-1$, we immediately see
that $t_m^\delta \le s_m^\delta$. The terminology stems from the fact that
$t_m^\delta$ and $s_m^\delta$ balance the bias and variance in weak ($\E\|K_m
f^\delta_{k,m} - g_m^\delta\|_{\R^m}^2$) and strong norm ($\E\|
f^\delta_{k,m}-P_{\mathcal{N}(K_m)^\perp}f^\dagger\|^2$), respectively. Note
that, as we have access only to a finite-dimensional discretization, all we can
hope for is to approximate the projection of $f^\dagger$ onto the orthogonal
complement of the kernel of $K_m$.
Our main result is the following non-asymptotic error bound.
\begin{theorem}\label{l2}
 For $L_s:=\frac{\sqrt{1+\varepsilon}}{\varepsilon} + \sqrt{34\varepsilon+36}$ and uniformly for all $f^\dagger$ with $\frac{m}{2} \ge s_m^\delta(f^\dagger)\ge t_m^\delta(f^\dagger)\ge t\in\N$, it holds that
		\begin{align*}
&\mathbb{P}\left(\left\|f^\delta_{k_{\rm gcv},m} - P_{\mathcal{N}(K_m)^\perp}f^\dagger\right\| \le\frac{L_s\sqrt{s^\delta_m }\delta }{\sigma_{\frac{s^\delta_m}{\varepsilon^2},m}} \right)\\
&\qquad\ge 1-p_\varepsilon\left(\frac{2}{3}\frac{\varepsilon}{1+\varepsilon}t\right).
	\end{align*}
\end{theorem}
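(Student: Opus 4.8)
The plan is to diagonalise the problem in the singular system of $K_m$, to collect all randomness into one explicit high‑probability event, and then to trap the data‑driven index $k_{\rm gcv}$ between constant multiples of the two oracles. First I would pass to singular coordinates: writing $a_j:=(f^\dagger,v_{j,m})$, $b_j:=\sigma_{j,m}a_j$ and $\xi_j:=(\varepsilon,u_{j,m})$ for the noise vector $\varepsilon=(\varepsilon_1,\dots,\varepsilon_m)$, the identities $g^\dagger_m=K_mf^\dagger$ and $K_m^*u_{j,m}=\sigma_{j,m}v_{j,m}$ give $(g^\delta_m,u_{j,m})=b_j+\delta\xi_j$, hence
\begin{align*}
\left\|f^\delta_{k,m}-P_{\mathcal{N}(K_m)^\perp}f^\dagger\right\|^2&=\delta^2\sum_{j=1}^k\frac{\xi_j^2}{\sigma_{j,m}^2}+\sum_{j=k+1}^m a_j^2,\\
\Psi_m(k)&=\frac{\sum_{j=k+1}^m\left(b_j+\delta\xi_j\right)^2}{\left(1-k/m\right)^2}.
\end{align*}
The first summand of the error is the propagated‑noise term, the second the bias term, and it is enough to bound each of them at $k=k_{\rm gcv}$.

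Next I would fix $\tau:=\tfrac23\tfrac{\varepsilon}{1+\varepsilon}t$ and introduce the event $\mathcal{A}$ on which the empirical noise energy deviates from its expectation by no more than a fraction of order $\varepsilon$ on the scale $\tau$; Markov's inequality, applied in exactly the form encoded in the definition of $p_\varepsilon$ and together with the fact that $t\mapsto\E\left[\left|\tfrac1t\sum_{j\le t}(\varepsilon_j^2-1)\right|\right]$ is non‑increasing, gives $\PP(\mathcal{A})\ge 1-p_\varepsilon(\tau)$. On $\mathcal{A}$ the partial sums $\sum_{j\le k}\xi_j^2$, the tails $\sum_{j>k}\xi_j^2$ and the cross terms $\sum_{j>k}b_j\xi_j$ stay within $(1\pm c\varepsilon)$‑multiples of their expectations for every $k$ in the relevant range, so the remainder of the argument is deterministic.

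For the upper bound on $k_{\rm gcv}$ I would use that, past the weak oracle, the residual $R(k):=\sum_{j>k}(b_j+\delta\xi_j)^2$ is essentially the untouched noise energy: the definition of $t^\delta_m$ forces $\sum_{j>k}b_j^2<k\delta^2$ for $k>t^\delta_m$, so on $\mathcal{A}$ one has $\Psi_m(k)=R(k)/(1-k/m)^2\approx m^2\delta^2/(m-k)$, which is increasing in $k$. Comparing this, for $k>s^\delta_m/\varepsilon^2$, with the value of $\Psi_m$ just beyond $t^\delta_m$ (where it is of order $m\delta^2$) then yields $k_{\rm gcv}\le s^\delta_m/\varepsilon^2$; together with $\sum_{j\le k}1/\sigma_{j,m}^2\le k/\sigma_{k,m}^2$ and the monotonicity of $k\mapsto k/\sigma_{k,m}^2$, the propagated‑noise term is at most $\tfrac{1+\varepsilon}{\varepsilon^2}\,s^\delta_m\delta^2/\sigma_{s^\delta_m/\varepsilon^2,m}^2$. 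For the bias I would show that any $k$ for which $\sum_{j>k}a_j^2$ exceeds the target is rejected by GCV: either the omitted signal energy $\sum_{j>k}b_j^2$ is sizeable and keeps $R(k)$, hence $\Psi_m(k)$, above $\Psi_m(k_{\rm gcv})$, or it is concentrated in a single coefficient $b_{j_0}$ which, remaining in the residual, inflates $\Psi_m(k)$. Feeding the two oracle inequalities and the elementary bound $\sigma_{s^\delta_m/\varepsilon^2,m}\le\sigma_{s^\delta_m,m}$ into this dichotomy gives $\sum_{j>k_{\rm gcv}}a_j^2\le(34\varepsilon+36)\,s^\delta_m\delta^2/\sigma_{s^\delta_m/\varepsilon^2,m}^2$. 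Adding the two estimates on $\mathcal{A}$ and using $\sqrt{a+b}\le\sqrt a+\sqrt b$ produces the factor $L_s$, while the probability is $\PP(\mathcal{A})$.

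The step I expect to be the main obstacle is the uniform stochastic control underlying the second and third paragraphs: the random residual $R(k)$ must be compared with its mean simultaneously for all $k\le m/2$ from only the fourth‑moment‑type information carried by $p_\varepsilon$, and in particular the mean‑zero cross terms $\sum_{j>k}b_j\xi_j$---which a naive Cauchy--Schwarz bound would allow to dominate the untouched noise energy---have to be tamed by a maximal inequality. Interlocked with this is the bias estimate: a single singular coefficient just past the weak oracle may be arbitrarily large, so one must quantify exactly how far such a spike drives $k_{\rm gcv}$ upward, and it is precisely this bookkeeping that fixes the constants $\tfrac23\tfrac{\varepsilon}{1+\varepsilon}$ and $34\varepsilon+36$.
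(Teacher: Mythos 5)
Your architecture matches the paper's: a single high-probability event built from $p_\varepsilon$, an upper bound on $k_{\rm gcv}$ obtained by comparing values of $\Psi_m$, and separate treatment of the propagated-noise and bias terms via the two oracles. However, two of the steps you yourself flag as the main obstacles are genuine gaps, and in both cases the resolution is different from what you sketch.

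First, the stochastic control. Your event $\mathcal{A}$ is required to control the cross terms $\sum_{j>k} b_j\xi_j$ uniformly in $k$, and you correctly observe that $p_\varepsilon$ — which only quantifies $\E\bigl[\bigl|\tfrac{1}{t}\sum_{j\le t}(\varepsilon_j^2-1)\bigr|\bigr]$ — gives you no handle on them; no maximal inequality for these mean-zero cross terms is available from the stated assumptions, so the event you describe cannot be shown to have probability $1-p_\varepsilon(\tau)$. The paper avoids the cross terms altogether: it never expands $(b_j+\delta\xi_j)^2$, but instead bounds $\sqrt{\sum_{j>k}(b_j+\delta\xi_j)^2}$ from above and below by $\sqrt{\sum_{j>k}\delta^2\xi_j^2}\pm\sqrt{\sum_{j>k}b_j^2}$ (triangle inequality in $\R^{m-k}$), so the only random quantities needing uniform control are the pure noise energies $\sum_{j=k+1}^{l}\xi_j^2$. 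These are handled on the event $\Omega_t$ of \eqref{l2:e0} via a Kolmogorov--Doob inequality for backwards martingales, which is exactly where the constant $\tfrac{2}{3}\tfrac{\varepsilon}{1+\varepsilon}$ comes from. Without this reorganization your proof does not close.

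Second, the bias bound. Your dichotomy (``either the omitted signal energy is sizeable and GCV rejects $k$, or it is concentrated in a single spike'') is not the mechanism, and the spike branch is not developed into an argument; note also that GCV is perfectly entitled to select $k_{\rm gcv}<t_m^\delta$, so the remaining bias cannot be controlled by showing such $k$ are ``rejected.'' The actual argument uses minimality quantitatively: from $\Psi_m(k_{\rm gcv})\le\Psi_m(t_m^\delta)$ one isolates
\begin{equation*}
\sum_{j=k_{\rm gcv}+1}^{t_m^\delta}(g_m^\delta,u_{j,m})_{\R^m}^2
\le\Bigl[\bigl(1-\tfrac{k_{\rm gcv}}{m}\bigr)^2-\bigl(1-\tfrac{t_m^\delta}{m}\bigr)^2\Bigr]\Psi_m(t_m^\delta)
\le\frac{2t_m^\delta}{m}\,\Psi_m(t_m^\delta)\le 16(1+\varepsilon)\,t_m^\delta\delta^2,
\end{equation*}
which, combined with the definition of $t_m^\delta$ for the tail past $t_m^\delta$, yields $\sum_{j>k_{\rm gcv}}(g_m^\dagger,u_{j,m})^2\le(35+34\varepsilon)t_m^\delta\delta^2$ and hence, through $\sigma_{s_m^\delta,m}$ and the strong oracle, the constant $\sqrt{34\varepsilon+36}$. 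This bookkeeping, not a spike analysis, is what fixes the constants. The remaining pieces of your outline (the lower bound $\Psi_m(k)\gtrsim(1-2\varepsilon)^2 m^2\delta^2/(m-k)$ past the weak oracle, the resulting bound $k_{\rm gcv}\le t_m^\delta/\varepsilon^2\le s_m^\delta/\varepsilon^2$, and the noise bound $\tfrac{1+\varepsilon}{\varepsilon^2}s_m^\delta\delta^2/\sigma_{s_m^\delta/\varepsilon^2,m}^2$) are essentially correct and agree with the paper.
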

Note that the requirement $s_m^\delta(f^\dagger)\le \frac{m}{2}$ is made for
convenience and avoids considering a second summand in the numerator due to the
constraint $k^\delta_{\rm gcv}\le \frac{m}{2}$.

A key advantage of GCV is that it does not require any knowledge of the noise
level $\delta$. Therefore it belongs to the class of heuristic parameter choice
rules. The term heuristic stems from the fact that these rules provably do not
assemble convergent regularization schemes under a classical deterministic
worst-case noise model, due to the seminal work by Bakushinskii
\cite{bakushinskii1984remarks}. Still, for the white noise error model some
heuristic parameter choice rules, i.e., the quasi-opimality criterion and the
heuristic discrepancy principle yield convergent regularization methods, see
Bauer \& Rei\ss{} \cite{bauer2008regularization} and Jahn \cite{jahn2023noise}.
In order to prove mini-max optimality for those approaches, however additional
to the classical source condition the true solution must fulfill a
self-similarity condition, which is a substantial structural assumption as it
demands a concrete relation between the high and low frequency parts of the
unknown solution. Therefore it is remarkable that GCV yields mini-max
optimality without assuming self-similarity, as the following corollary shows. In
order to compare with the results from \cite{bauer2008regularization} and
\cite{jahn2023noise}, we assume here that the discretization scheme is along the
singular vectors of the continuous operator $K$, i.e., $ (P_m g)_j = (g,u_j)$. In
the next section we consider an exemplary integral equation, where, more
realistically, $P_m$ delivers point evaluations of the right hand side. For
mildly ill-posed problems, we derive the following oracle inequality, showing
that generalized cross-validation attains, up to a multiplicative constant, the
minimal error of all spectral-cut off estimators.
\begin{corollary}\label{cor1}
    Assume that $(P_m g)_j = (g,u_j)$ and $C_q j^{-q} \ge \sigma_j^2 \ge c_q j^{-q}$ for all $j\in\N$. Then, with $L_s$ from Theorem \ref{l2} and $L_s':=2\sqrt{L_s^2c_q^{-1}\varepsilon^{-2q}+2^{2q}+1} $ for all $f^\dagger$ with $\frac{m}{2} \ge s_m^\delta(f^\dagger)\ge t_m^\delta(f^\dagger) \ge t\in\N$, it holds that
    \begin{align*}
    &\mathbb{P}\left( \|f_{k_{\rm gcv},m}^\delta - f^\dagger\| \le L_s'  \min_{k=1,..,m}\sqrt{\E[\| f_{k,m}^\delta - f^\dagger\|^2]}\right)\\
    &~\ge 1 - p_\varepsilon\left(\frac{2}{3} \frac{\varepsilon}{1+\varepsilon} t\right).
    \end{align*}
\end{corollary}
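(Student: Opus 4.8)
The plan is to derive the corollary directly from Theorem~\ref{l2}, working on the very same event (whose probability is already $\ge 1-p_\varepsilon(\frac23\frac{\varepsilon}{1+\varepsilon}t)$), so that the remaining argument is entirely deterministic. First I would exploit the special form of the discretisation. From $(P_m g)_j=(g,u_j)$ one gets $K_m f=(\sigma_j (f,v_j))_{j=1}^m$, hence $\sigma_{j,m}=\sigma_j$, $v_{j,m}=v_j$, $u_{j,m}=e_j$, $\mathcal N(K_m)^\perp=\overline{\mathrm{span}}\{v_1,\dots,v_m\}$ and $f^\dagger-P_{\mathcal N(K_m)^\perp}f^\dagger=\sum_{j>m}(f^\dagger,v_j)v_j$. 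Since $f^\delta_{k_{\rm gcv},m}$ and $P_{\mathcal N(K_m)^\perp}f^\dagger$ both lie in $\mathcal N(K_m)^\perp$, Pythagoras gives the exact split
\[
\bigl\|f^\delta_{k_{\rm gcv},m}-f^\dagger\bigr\|^2=\bigl\|f^\delta_{k_{\rm gcv},m}-P_{\mathcal N(K_m)^\perp}f^\dagger\bigr\|^2+\sum_{j>m}(f^\dagger,v_j)^2 ,
\]
and Theorem~\ref{l2} bounds the first term by $L_s^2\,s^\delta_m\delta^2/\sigma_{s^\delta_m/\varepsilon^2,m}^2$ on the good event.

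Next I would make the right-hand side of the corollary explicit. Using $\E[\varepsilon_j^2]=1$ and orthonormality of the $v_j$, a direct computation in this discretisation gives
\[
\E\bigl[\|f^\delta_{k,m}-f^\dagger\|^2\bigr]=\delta^2\sum_{j=1}^k\sigma_j^{-2}+\sum_{j>k}(f^\dagger,v_j)^2=:\phi(k),\qquad k=1,\dots,m .
\]
Two lower bounds on $\min_{1\le k\le m}\phi(k)$ are then needed. The first, $\min_k\phi(k)\ge\sum_{j>m}(f^\dagger,v_j)^2$, is immediate from the bias term for every $k\le m$ and absorbs the discretisation tail in the split above. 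The second is $\min_k\phi(k)\ge c(q)\,s^\delta_m\delta^2/\sigma_{s^\delta_m}^2$: for $k\le s^\delta_m$ the bias term obeys $\sum_{j>k}(f^\dagger,v_j)^2\ge\sum_{j=k+1}^m(f^\dagger,v_j)^2\ge s^\delta_m\delta^2/\sigma_{s^\delta_m}^2$ by the maximality built into the definition~\eqref{os} of the strong oracle, while for $k>s^\delta_m$ the variance term obeys $\delta^2\sum_{j\le k}\sigma_j^{-2}\ge\delta^2\sum_{j\le s^\delta_m}\sigma_j^{-2}$, and the two-sided estimate $c_q j^{-q}\le\sigma_j^2\le C_q j^{-q}$ turns this partial sum into a $q$-dependent multiple of $s^\delta_m\delta^2/\sigma_{s^\delta_m}^2$ — comparing $\sigma_j$ at indices differing by a bounded factor is where a constant of size $2^{-2q}$ enters.

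Finally I would rescale the Theorem~\ref{l2} term to the oracle scale: once more from $c_q j^{-q}\le\sigma_j^2\le C_q j^{-q}$,
\[
\frac{s^\delta_m\delta^2}{\sigma_{s^\delta_m/\varepsilon^2,m}^2}=\frac{\sigma_{s^\delta_m}^2}{\sigma_{s^\delta_m/\varepsilon^2}^2}\cdot\frac{s^\delta_m\delta^2}{\sigma_{s^\delta_m}^2}\lesssim\frac{\varepsilon^{-2q}}{c_q}\cdot\frac{s^\delta_m\delta^2}{\sigma_{s^\delta_m}^2}.
\]
Combining the Pythagoras split with the two lower bounds on $\min_k\phi(k)$ and with elementary bookkeeping of the form $a+b\le2\max\{a,b\}$ then yields $\|f^\delta_{k_{\rm gcv},m}-f^\dagger\|^2\le(L_s')^2\min_{k=1,\dots,m}\E[\|f^\delta_{k,m}-f^\dagger\|^2]$ with $L_s'$ as in the statement, and taking square roots finishes the proof. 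The genuinely delicate point is the second lower bound on $\min_k\phi(k)$ — certifying that no spectral cut-off index can improve on the strong-oracle rate $\sqrt{s^\delta_m}\,\delta/\sigma_{s^\delta_m}$ by more than an explicit $q$-dependent factor — together with keeping every constant explicit while passing between the indices $s^\delta_m$, $s^\delta_m/\varepsilon^2$ and their dyadic neighbours under only polynomial control of the singular values.
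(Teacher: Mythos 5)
Your proposal is correct and follows essentially the same route as the paper: the Pythagorean split off $\mathcal{N}(K_m)^\perp$, the rescaling $\sigma_{s_m^\delta}^2/\sigma_{s_m^\delta/\varepsilon^2}^2 \lesssim c_q^{-1}\varepsilon^{-2q}$ applied to Theorem \ref{l2}, and the lower bound $\min_k \E\|f^\delta_{k,m}-f^\dagger\|^2 \ge \tfrac12\, s_m^\delta\delta^2/\sigma_{s_m^\delta}^2$ obtained by balancing the increasing variance term against the decreasing bias term at the strong oracle. The only (harmless) deviation is that you absorb the discretization tail $\sum_{j>m}(f^\dagger,v_j)^2$ by lower-bounding the oracle risk directly by it, whereas the paper bounds this tail by $s_m^\delta\delta^2/\sigma_{s_m^\delta}^2$; your variant is if anything slightly more careful, since that inequality does not follow verbatim from the definition \eqref{os}, in which the sum is truncated at $m$.
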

Hereby, the restriction on the decay of the singular values is needed to guarantee that $\sigma_{\frac{s_m^\delta}{\varepsilon^2}}$ remains comparable to $\sigma_{s_m^\delta}$ in Theorem \ref{l2}. We guess that GCV is probably not consistent for general ill-posed
problems, as it might lack stability for exponentially falling singular values.
Such limitations regarding the robustness for exponentially ill-posed problems
have recently been studied for several related methods based on unbiased risk
estimation from Lucka \& al \cite{lucka2018risk}.

Achieving stability for exponentially ill-posed problems is possible with certain modifications of GCV. Those methods were
developed by Lukas and are called  robust and strong robust cross-validation,
see \cite{lukas2006robust} and \cite{lukas2008strong}. In both cases, the analysis is done in the weak sense, as explained above (that is, for an index minimizing the expectation of a certain functional).

\section{Application to an integral equation}
\label{applic}
We now analyse a concrete integral equation for which point evaluations of the
right-hand side are available. Consider
\begin{equation}\label{appl:int}
    (Kf)(x)=\int_0^1\kappa(x,y)f(y)\mathrm{d}x,
\end{equation}
where $K:L^2(0,1)\to L^2(0,1)$ is the compact linear operator with
continuous kernel $\kappa: (0,1)^2 \rightarrow \R$ given by $(x,y)\mapsto
\max(x(1-y),y(1-x))$. Note that continuity of
$\kappa$ implies that $Kf$ is continuous even if $f$ is only
square-integrable, hence point evaluations of $Kf$ are well defined. We assume a
uniform discretization with collocation points
$\{\xi_{j,m}\}_{j=1}^m  \subset (0,1)$, i.e.,
$\xi_{j,m}:=j/(m+1)$, and a set of noisy values
\begin{equation}\label{applic:colloc}
  g^\delta_{j,m}:=g^\dagger(\xi_{j,m})+\delta\varepsilon_j, \quad j=1,...,m,
  \quad \mbox{and}\quad g_{m}^{\delta}:=(g_{j,m}^{\delta})_{j=1}^m,
\end{equation}
of function $g^\dagger$ at these collocation points. Analogously to
\eqref{main:map}, we obtain a semi-discrete model $K_m$:
\begin{align*}
  K_m: L^2(0,1) &\;\rightarrow\;\R^m,\\
  f               &\;\mapsto\;\bigl((Kf)(\mathbf{\xi}_{j,m})\bigr)_{j=1}^m
  \;=\;\Bigl(\int_{0}^1\kappa(\mathbf{\xi}_{j,m},y)\,
  f(y)\,\mathrm{d}y\Bigr)_{j=1}^m.
\end{align*}

The setting here is particularly illustrative, since we can give the exact
singular value decomposition of $K$ and $K_m$:
\begin{proposition}\label{l1}
	For $\lambda_k:=\pi^2 k^2=:\sigma_k^{-1}$ and
	$v_k(x):=\sqrt{2}\sin(\sqrt{\lambda_k} x)$ there holds $K^*Kv_k=\sigma_k^2
	v_k$ for all $k\in\N$ and the $(v_k)_{k\in\N}$ form an orthonormal basis of
	$\mathcal{N}(K)^\perp=L^2(0,1)$. Moreover, for
	\begin{equation*}
    \sigma_{k,m}:=\frac{\sqrt{1-\frac{2}{3}\sin^2
    \left(\frac{\sqrt{\lambda_k}}{2(m+1)}\right)}}
    {4\sqrt{m+1}^3\sin^2\left(\frac{\sqrt{\lambda_k}}{2(m+1)}\right)}
	\end{equation*}
	and
	\begin{equation*}
v_{k,m}(\cdot):=\sum_{l=1}^m\sin\left(\sqrt{\lambda_k}\xi_l\right)\kappa(\xi_{l,m},
 \cdot)/\sigma_{k,m}\quad
 \mbox{and}\quad u_{k,m}:=\sqrt{\frac{2}{m+1}}\left( \sin(k\pi
\xi_{j,m})\right)_{j=1}^m
\end{equation*}
	 it holds that $K_mv_{k,m} = \sigma_{k,m} u_{k,m}$ and $K_m^* u_{k,m} =
\sigma_{k,m} v_{k,m}$, with $\left(v_{k,m}\right)_{k\le m}$ and
$\left(u_{k,m}\right)_{k\le m}$ orthonormal bases of
$\mathcal{N}(K_m)^\perp\subset L^2(0,1)$ and $\R^m$ respectively.
\end{proposition}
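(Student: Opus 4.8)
The plan is to establish the two singular systems by separate arguments: the one for the continuous operator $K$ from classical Sturm--Liouville theory, and the one for the semi-discrete operator $K_m$ by computing the adjoint explicitly, identifying the sampled sine vectors as left singular vectors, and evaluating the resulting eigenvalues via a classical trigonometric series.

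For the continuous part, the starting point is that $\kappa$ is the Green's function of $-u'' = f$ on $(0,1)$ with homogeneous Dirichlet boundary conditions: for $f$ smooth, writing $u := Kf$ as the sum of the two pieces obtained by splitting the integral at $y = x$, one differentiates twice and reads off $-u'' = f$ together with $u(0) = u(1) = 0$. Hence $Kv = \sigma v$ is equivalent to $-v'' = \sigma^{-1}v$, $v(0) = v(1) = 0$, whose eigenpairs are $\sigma^{-1} = \lambda_k = \pi^2 k^2$ and $v_k(x) = \sqrt{2}\sin(\pi k x) = \sqrt{2}\sin(\sqrt{\lambda_k}\,x)$, the latter $L^2(0,1)$-normalised and a complete orthonormal system (the Fourier sine basis). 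Symmetry of $\kappa$ makes $K$ self-adjoint, so $K^*Kv_k = K^2 v_k = \sigma_k^2 v_k$, and completeness of $(v_k)$ gives $\mathcal{N}(K) = \{0\}$, i.e.\ $\mathcal{N}(K)^\perp = L^2(0,1)$. This disposes of the first half of the proposition.

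For $K_m$, I would first record the adjoint: from $\langle K_m f, w\rangle_{\R^m} = \sum_l w_l\int_0^1\kappa(\xi_{l,m},y)f(y)\,\mathrm dy$ one reads off $K_m^*w = \sum_{l=1}^m w_l\,\kappa(\xi_{l,m},\cdot)$, so that $\mathcal{N}(K_m)^\perp = \mathcal{R}(K_m^*) = \operatorname{span}\{\kappa(\xi_{l,m},\cdot):l=1,\dots,m\}$. With $\mathbf s_k := (\sin(\pi k\xi_{j,m}))_{j=1}^m$, the goal is that the $\mathbf s_k$ are the left singular vectors. The two tools are: the uniformly convergent eigenexpansion of the kernel of $K^2$, $(K^2)(x,z) = \sum_{k\ge1}\sigma_k^2 v_k(x)v_k(z) = 2\sum_{k\ge1}(\pi^4k^4)^{-1}\sin(\pi kx)\sin(\pi kz)$; and the classical discrete sine orthogonality $\sum_{l=1}^m\sin(\pi k\xi_{l,m})\sin(\pi k'\xi_{l,m}) = \tfrac{m+1}{2}\delta_{kk'}$ for $1\le k,k'\le m$ (the $\mathbf s_k$ being the eigenvectors of $\operatorname{tridiag}(-1,2,-1)$), together with the aliasing relations $\mathbf s_{k+2(m+1)} = \mathbf s_k$, $\mathbf s_{2(m+1)-k} = -\mathbf s_k$, $\mathbf s_{m+1} = 0$, which represent every $\mathbf s_k$ ($k\in\N$) as $\pm\mathbf s_{\rho(k)}$ or $0$ with $\rho(k)\in\{1,\dots,m\}$. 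Since $(K_mK_m^*w)_j = \int_0^1\kappa(\xi_{j,m},y)(K_m^*w)(y)\,\mathrm dy = \sum_l w_l(K^2)(\xi_{j,m},\xi_{l,m})$, taking $w = \mathbf s_{k'}$ and applying the orthogonality collapses the $l$-sum to the $k$ folding onto $k'$ (the aliasing sign cancelling the sign in $\sin(\pi k\xi_{j,m}) = \pm\sin(\pi k'\xi_{j,m})$), whence $K_mK_m^*\mathbf s_{k'} = \mu_{k'}\mathbf s_{k'}$ with $\mu_{k'} = (m+1)\pi^{-4}\sum_{k:\rho(k)=k'}k^{-4}$. The folding identifies this index set with $\{\,|2(m+1)n+k'|:n\in\mathbb{Z}\,\}$, so $\mu_{k'} = \tfrac{1}{16\pi^4(m+1)^3}\sum_{n\in\mathbb{Z}}(n+\tfrac{k'}{2(m+1)})^{-4}$, and the classical identity $\sum_{n\in\mathbb{Z}}(n+a)^{-4} = \pi^4\sin^{-4}(\pi a)\bigl(1-\tfrac23\sin^2(\pi a)\bigr)$ — got by differentiating $\pi\cot(\pi a) = \sum_{n\in\mathbb{Z}}(n+a)^{-1}$ three times — with $\pi a = \tfrac{\pi k'}{2(m+1)} = \tfrac{\sqrt{\lambda_{k'}}}{2(m+1)}$ produces exactly the stated $\sigma_{k',m}^2$; in particular $\sin^2(\tfrac{\pi k'}{2(m+1)})\in(0,1)$ for $1\le k'\le m$, so each $\sigma_{k',m}>0$. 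Finally, setting $u_{k,m} := \sqrt{\tfrac{2}{m+1}}\mathbf s_k$ and $v_{k,m} := \sigma_{k,m}^{-1}K_m^*u_{k,m}$ (the stated $v_{k,m}$ up to this normalisation), the orthogonality makes $(u_{k,m})_{k\le m}$ an orthonormal basis of $\R^m$, while $\langle v_{k,m},v_{k',m}\rangle = \sigma_{k,m}^{-1}\sigma_{k',m}^{-1}\langle K_mK_m^*u_{k,m},u_{k',m}\rangle = \delta_{kk'}$ makes $(v_{k,m})_{k\le m}$ an orthonormal set inside $\operatorname{span}\{\kappa(\xi_{l,m},\cdot)\} = \mathcal{N}(K_m)^\perp$, hence a basis of it; the relations $K_mv_{k,m} = \sigma_{k,m}^{-1}K_mK_m^*u_{k,m} = \sigma_{k,m}u_{k,m}$ and $K_m^*u_{k,m} = \sigma_{k,m}v_{k,m}$ are then immediate.

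The step I expect to be the main obstacle is the combinatorial bookkeeping just sketched: pinning down, for each $k'\in\{1,\dots,m\}$, exactly which indices $k\in\N$ fold onto $k'$ and with which sign, and checking that the resulting multiset of $|k|$ is precisely the two-sided progression $\{2(m+1)n+k':n\in\mathbb{Z}\}$, so that the finite spectral sum genuinely becomes the full lattice sum handled by the $\cot$-identity. Everything else — verifying $-u'' = f$ for the Green's kernel, quoting the discrete sine orthogonality and the $\cot$-expansion, and matching normalising constants — is routine.
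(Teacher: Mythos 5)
Your proof is correct, and for the semi-discrete part it takes a genuinely different route from the paper. The paper reduces everything to the $m\times m$ matrix $T_m=\bigl(\int_0^1\kappa(\xi_{i,m},y)\kappa(\xi_{j,m},y)\,\mathrm{d}y\bigr)_{ij}$, identifies the sampled sine vectors as its eigenvectors by showing that $T_m$ commutes with the discrete Laplacian $\Delta_m$, and then extracts $\sigma_{k,m}$ by evaluating $\|K_m^*z_{k,m}\|^2$ in two ways: once as $z_{k,m}^TT_mz_{k,m}=\sigma_{k,m}^2$, and once by expanding $\kappa(\xi_{l,m},\cdot)$ exactly in the hat-function basis, which brings in the mass matrix $R_m$ and a closed-form trigonometric sum for the eigenvalues of $S_m$. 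You instead sample the Mercer expansion of the iterated kernel $(K^2)(x,z)=\sum_k\sigma_k^2v_k(x)v_k(z)$ at the grid points, collapse the series by discrete sine orthogonality plus aliasing, and evaluate the surviving lattice sum via $\sum_{n\in\mathbb{Z}}(n+a)^{-4}=\pi^4\sin^{-4}(\pi a)\bigl(1-\tfrac{2}{3}\sin^2(\pi a)\bigr)$; I checked that this reproduces exactly $\sigma_{k,m}^2=\bigl(1-\tfrac{2}{3}\sin^2(\pi a)\bigr)/\bigl(16(m+1)^3\sin^4(\pi a)\bigr)$ with $a=\tfrac{k}{2(m+1)}$. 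Your route is more conceptual: it explains the factor $1-\tfrac23\sin^2$ as a fourth-order lattice sum and simultaneously yields the aliasing relation \eqref{claim:sv} that the paper proves separately in the appendix; the price is an infinite-series interchange (harmless, since $\sum_k\sigma_k^2<\infty$ and the $v_k$ are uniformly bounded) and the folding bookkeeping you flag, which you resolve correctly — the index set is $\{|2(m+1)n+k'|:n\in\mathbb{Z}\}$ and the alias sign cancels against the sign in $\sin(\pi k\xi_{j,m})=\pm\sin(\pi k'\xi_{j,m})$, so all terms enter with weight $+1$. The paper's route is more elementary and entirely finite, and produces the eigenvectors without any aliasing argument. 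Two small remarks. First, your parenthetical about normalisation is right: the displayed $v_{k,m}$ in the proposition is missing the factor $\sqrt{2/(m+1)}$ needed to make it a unit vector, as the paper's own identity $\|w_{k,m}\|^2=z_{k,m}^TT_mz_{k,m}=\sigma_{k,m}^2$ (with the normalised $z_{k,m}$) shows; your definition $v_{k,m}=\sigma_{k,m}^{-1}K_m^*u_{k,m}$ is the correct one. Second, both you and the paper read $\kappa$ as the Dirichlet Green's function $\min\bigl(x(1-y),y(1-x)\bigr)$ — which is what the eigenfunction claim requires, since with the $\max$ printed in \eqref{appl:int} one gets $(Kf)(0)=\int_0^1 yf(y)\,\mathrm{d}y\neq 0$ and the sines are not eigenfunctions — so your argument matches the intended statement exactly as the paper's does.
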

The proof will be given below in Section \ref{sec:lem}.

With this decomposition we can compute explicit error bounds, assuming that
$f^\dagger$ belongs to some unknown subset of $L^2$ with a certain smoothness.
For the kernel considered, we introduce the H\"older source classes
\begin{equation*}
\mathcal{X}_{s,\rho}:=\left\{f=(K^*K)^\frac{s}{2}h~:~h\in L^2,~\|h\|\le \rho
\right\}.
\end{equation*}
Proposition \ref{p4} below relates $\mathcal{X}_{s,\rho}$ to classical smoothness. We obtain the following error  bound.
\begin{theorem}\label{t0}
  Assume that $s> \frac{3}{4}$. Then, uniformly for all
  $f^\dagger\in\mathcal{X}_{s,\rho}$, the probability  that
  \begin{align*}
    &\| f^\delta_{k_{\rm gcv},m} - f^\dagger\|\\
    \le~& L_s \left(\frac{\delta}{\sqrt{m+1}}\right)^\frac{4s}{5+4s}
    \rho^\frac{5}{5+4s}+
    \frac{\|{f^\dagger}'\|}{\sqrt{2}(m+1)}\chi_{\{\frac{3}{4}<s\le
    \frac{5}{4}\}} + \frac{\|{f^\dagger}''\|}{2(m+1)^2}\chi_{\{s>\frac{5}{4}\}}
  \end{align*}
  is at least $1-p_{\varepsilon}\left(\frac{2}{3}
  \frac{\varepsilon}{\varepsilon+1}
  C_s\left(\frac{(m+1)\rho^2}{\delta^2}\right)^\frac{1}{5+4s}\right)$,
  where the constants $L_s$ and $C_s$ are given below in \eqref{t0c}
  and \eqref{t0c1}.
\end{theorem}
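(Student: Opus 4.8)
The plan is to deduce Theorem~\ref{t0} from the abstract bound of Theorem~\ref{l2} by feeding in two explicit, essentially deterministic ingredients supplied by Proposition~\ref{l1}: a bound on the projection (discretisation) error $\|P_{\mathcal{N}(K_m)^\perp}f^\dagger-f^\dagger\|$, and a quantitative localisation of the oracles $t^\delta_m(f^\dagger)$ and $s^\delta_m(f^\dagger)$ for $f^\dagger$ in the H\"older class $\mathcal{X}_{s,\rho}$. Write $h:=(m+1)^{-1}$ and $\hat f_j:=(f^\dagger,v_j)$ for the sine coefficients. From Proposition~\ref{l1} I will use three facts: $\mathcal{N}(K_m)^\perp$ is the span of $\kappa(\xi_{1,m},\cdot),\dots,\kappa(\xi_{m,m},\cdot)$; the explicit formula for $\sigma_{k,m}$ yields $\sigma_{k,m}^2\asymp (m+1)k^{-4}$ with computable constants for $k\le m/2$ (using $\tfrac2\pi\theta\le\sin\theta\le\theta$ and the bound $\tfrac23\sin^2\le\tfrac13$ in the radical); and $(v_j,v_{j,m})=\sqrt{m+1}\,\sigma_j/\sigma_{j,m}=:c_j$, which follows since $K_mv_j=\sigma_j\,(v_j(\xi_{l,m}))_l$ together with $\sum_{l=1}^m\sin(\pi j\xi_{l,m})^2=\tfrac{m+1}{2}$. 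Starting from $\|f^\delta_{k_{\rm gcv},m}-f^\dagger\|\le\|f^\delta_{k_{\rm gcv},m}-P_{\mathcal{N}(K_m)^\perp}f^\dagger\|+\|P_{\mathcal{N}(K_m)^\perp}f^\dagger-f^\dagger\|$, the first summand will be handled probabilistically through Theorem~\ref{l2}, the second deterministically.

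For the discretisation error I will use the Pythagorean identity together with the comparison element $w:=\sum_{j\le m}\hat f_j v_{j,m}\in\mathcal{N}(K_m)^\perp$, so that $\|P_{\mathcal{N}(K_m)^\perp}f^\dagger-f^\dagger\|\le\|w-f^\dagger\|$. Since $v_j\perp v_{k,m}$ whenever $j\ne k$ and $j,k\le m$, the vectors $v_j-v_{j,m}$ $(j\le m)$ are pairwise orthogonal with $\|v_j-v_{j,m}\|^2=2(1-c_j)$, and expanding $\|w-f^\dagger\|^2$ gives $\|w-f^\dagger\|^2=\sum_{j\le m}2(1-c_j)\hat f_j^2+\sum_{j>m}\hat f_j^2-2\sum_{j>m}\sum_{k\le m}\hat f_j\hat f_k(v_j,v_{k,m})$. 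A Taylor expansion of the explicit $\sigma_{k,m}$ shows that the $\theta_j^2$-terms in $c_j$ cancel, so $1-c_j\lesssim(jh)^4$, hence also $1-c_j\lesssim(jh)^2$ since $jh\le1$; combining this with $\sum_j j^2\hat f_j^2=\pi^{-2}\|{f^\dagger}'\|^2$ respectively $\sum_j j^4\hat f_j^2=\pi^{-4}\|{f^\dagger}''\|^2$ (finiteness guaranteed by Proposition~\ref{p4}, for $s>3/4$ resp.\ $s>5/4$) produces the leading contributions $\tfrac{h}{\sqrt2}\|{f^\dagger}'\|$ and $\tfrac{h^2}{2}\|{f^\dagger}''\|$. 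The tail $\sum_{j>m}\hat f_j^2$ is absorbed via the Poincar\'e-type bound $\sum_{j>m}\hat f_j^2\le\pi^{-2}m^{-2}\|{f^\dagger}'\|^2$ (and its $\|{f^\dagger}''\|$-analogue), and the aliasing cross term is estimated by Cauchy--Schwarz using the exact alias structure of $(v_j,v_{k,m})$ for $j>m$; all of this fits inside the deliberately non-sharp constants $\tfrac1{\sqrt2}$ and $\tfrac12$.

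It remains to control $\|f^\delta_{k_{\rm gcv},m}-P_{\mathcal{N}(K_m)^\perp}f^\dagger\|$ via Theorem~\ref{l2}, for which I estimate the oracles. From $(f^\dagger,v_{j,m})=\sigma_{j,m}^{-1}(K_mf^\dagger,u_{j,m})_{\R^m}$ and the form of $u_{j,m}$ one obtains $(f^\dagger,v_{j,m})=c_j\hat f_j+r_j$, where $r_j$ collects the aliased coefficients $\sqrt{m+1}\,\sigma_i\hat f_i/\sigma_{j,m}$ over $i\equiv\pm j\ (\mathrm{mod}\ 2(m+1))$, $i>m$. The source condition is $\sum_j (\pi^2 j^2)^{2s}\hat f_j^2\le\rho^2$, which gives $\sum_{j>k}(f^\dagger,v_{j,m})^2\le C\,\sigma_k^{2s}\rho^2$ up to an aliasing remainder that is of lower order for $s>3/4$ in the regime $s^\delta_m\le m/2$. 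Inserting this bound and $\sigma_{k,m}^2\asymp(m+1)k^{-4}$ into the defining inequality $k\delta^2/\sigma_{k,m}^2\le\sum_{j>k}(f^\dagger,v_{j,m})^2$ of $s^\delta_m$ reduces it, up to constants, to $k^{5+4s}\lesssim(m+1)\rho^2\delta^{-2}$, so $s^\delta_m\asymp\bigl((m+1)\rho^2\delta^{-2}\bigr)^{1/(5+4s)}$; since $\sigma_{\cdot,m}$ is decreasing the same chain bounds $t^\delta_m$ from below by a fixed multiple of this quantity, which is the constant $C_s$ in the statement and supplies an admissible $t$ for Theorem~\ref{l2}. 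As the singular values decay polynomially, $\sigma_{s^\delta_m/\varepsilon^2,m}\asymp\varepsilon^4\sigma_{s^\delta_m,m}\asymp\varepsilon^4(m+1)^{1/2}(s^\delta_m)^{-2}$, hence $\frac{L_s\sqrt{s^\delta_m}\,\delta}{\sigma_{s^\delta_m/\varepsilon^2,m}}\asymp\varepsilon^{-4}L_s(s^\delta_m)^{5/2}\delta(m+1)^{-1/2}\asymp L_s\bigl(\delta/\sqrt{m+1}\bigr)^{4s/(5+4s)}\rho^{5/(5+4s)}$; tracking all constants yields the $L_s$ of the statement, and plugging the upper bound for $s^\delta_m$ (using monotonicity of $\sigma_{\cdot,m}$) and the lower bound for $t^\delta_m$ into Theorem~\ref{l2} produces both the error bound and the probability $1-p_\varepsilon\bigl(\tfrac23\tfrac{\varepsilon}{1+\varepsilon}C_s((m+1)\rho^2\delta^{-2})^{1/(5+4s)}\bigr)$. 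The only remaining case is $f^\dagger$ so small that $t^\delta_m$ lies below this threshold; then $\|f^\dagger\|$ is already below the asserted bound, $k_{\rm gcv}=0$ holds with at least the asserted probability, and the claim is trivial.

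The main obstacle is the discretisation step together with the aliasing control. Obtaining the clean constants $\tfrac1{\sqrt2}$ and $\tfrac12$ requires the precise Taylor behaviour of $1-c_j$ read off from the explicit $\sigma_{k,m}$, and one must simultaneously verify that the aliasing remainders in $\sum_{j>k}(f^\dagger,v_{j,m})^2$ (and the high-frequency tail $\sum_{j>m}\hat f_j^2$) are genuinely of lower order in $m$ under the Hölder source condition; it is exactly this estimate that forces the restriction $s>3/4$, and the same aliasing bound is what bridges the discrete-SVD oracles $t^\delta_m,s^\delta_m$ and the continuous class $\mathcal{X}_{s,\rho}$ in the third step.
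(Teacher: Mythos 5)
Your overall architecture coincides with the paper's: split the error into the estimation part $\|f^\delta_{k_{\rm gcv},m}-P_{\mathcal N(K_m)^\perp}f^\dagger\|$, controlled by Theorem \ref{l2} after localizing the oracles through the alias identity \eqref{claim:sv} and the explicit $\sigma_{k,m}$, plus a deterministic discretization error; your oracle computation (source condition $\Rightarrow \sum_{j>k}(f^\dagger,v_{j,m})^2\lesssim k^{-4s}\rho^2$, balanced against $k\delta^2/\sigma_{k,m}^2\asymp k^5\delta^2/(m+1)$) is essentially the paper's. Where you genuinely diverge is the discretization error. The paper observes that $\mathcal N(K_m)^\perp$ is the space of piecewise linear splines on the grid $\xi_{1,m},\dots,\xi_{m,m}$ and quotes classical spline-projection estimates together with Proposition \ref{p4}, whereas you recompute the projection error in the sine basis via the comparison element $w=\sum_{j\le m}\hat f_jv_{j,m}$ (with $\hat f_j:=(f^\dagger,v_j)$, $c_j:=(v_j,v_{j,m})$, $h:=(m+1)^{-1}$) and the Taylor cancellation $1-c_j=O((jh)^4)$. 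That route is viable and self-contained, but one step is dangerous as written: for the cross term $\sum_{j>m}\sum_{k\le m}\hat f_j\hat f_k(v_j,v_{k,m})$, termwise Cauchy--Schwarz with only $|(v_j,v_{k,m})|\le (k/j)^2$ yields $O(h\|{f^\dagger}'\|\,\|f^\dagger\|)$, which exceeds the target $O(h^2\|{f^\dagger}'\|^2)$. You must instead exploit $\sum_{j>m}(v_j,v_{k,m})^2=1-c_k^2=O((kh)^4)$ for each fixed $k$ (e.g.\ by grouping $w-f^\dagger$ along the orthogonal $v_j$, $j>m$), which restores the correct order; the paper's spline shortcut sidesteps this issue entirely.

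The one genuine gap concerns the probability. Theorem \ref{l2} delivers $1-p_\varepsilon(\tfrac23\tfrac{\varepsilon}{1+\varepsilon}t)$ only for $t\le t_m^\delta(f^\dagger)$, so the stated probability with $t=C_s((m+1)\rho^2/\delta^2)^{1/(5+4s)}$ requires a \emph{lower} bound on $t_m^\delta$. Your claim that ``the same chain bounds $t_m^\delta$ from below by a fixed multiple'' is false: the source condition only caps the tails $\sum_{j>k}\sigma_{j,m}^2(f^\dagger,v_{j,m})^2$ from above and hence can only produce \emph{upper} bounds on the oracles, and no uniform lower bound over $\mathcal X_{s,\rho}$ exists ($f^\dagger=0$ has $t_m^\delta=0$). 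Your fallback for small $t_m^\delta$ also does not close the case: a small $t_m^\delta$ means the weak tail decays fast relative to $k\delta^2$, not that $\|f^\dagger\|$ is small, and the assertion that $k_{\rm gcv}=0$ holds with the asserted probability is unjustified. In fairness, the paper's own proof is silent on exactly this point --- it proves only the upper bounds \eqref{claim:oracle}--\eqref{claim:oracle1} and then writes ``for $t_m^\delta\ge t$'', effectively inheriting the hypothesis $t_m^\delta\ge t$ from Theorem \ref{l2} rather than verifying it --- but you should not present this case as resolved when your argument does not resolve it.
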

We briefly comment on the result. The first term in the upper bound resembles the
optimal convergence rate for the source condition $\mathcal{X}_{s,\rho}$ and is derived from Theorem \ref{l2}.  The remaining two
terms  bound the discretization error under different smoothness
$s$ of the exact solution and express how good the exact solution
$f^\dagger$ can be represented in the span of
$\kappa(\xi_{1,m},\cdot),...,\kappa(\xi_{m,m},\cdot)$ (note that those span the
space of piece-wise linear functions on the grid given by
$\xi_{1,m},...,\xi_{m,m}$). Note that the assumption $s>\frac{3}{4}$ imposes a differentiability condition onto the solution $f^\dagger$. If this
assumption is violated a similar bound will still hold, however it is not
possible to explicitly bound the aforementioned discretization error anymore.

\section{Proofs}
This section collects the proofs of the main results.
\subsection{Proof of Theorem \ref{l2}}
Before proving Theorem \ref{l2}, we establish an auxiliary proposition showing
that the probability of the following event, where the stochastic oscillations
behave 'nicely', tends to one:
\begin{align}\label{l2:e0}
    &\Omega_{t}:=\\\notag
    &~\left\{\omega\in\Omega~:~\left|\sum_{j=k+1}^l(g_m^\delta(\omega)-g_m^\dagger,u_{j,m})_{\R^m}^2-(l-k)\delta^2\right|\le \varepsilon (l-k)\delta^2,~\forall l\ge t,~k\le \frac{l}{2} \right\}.
\end{align}
\begin{proposition}\label{p0}
    The following bound holds for the probability $\Omega_{t}$ defined in
    \eqref{l2:e0}
    \begin{equation}\label{l2:e1}
        \mathbb{P}\left(\Omega_{t}\right) \ge 1- p_\varepsilon\left(\frac{2}{3} \frac{\varepsilon}{1+\varepsilon}t\right).
    \end{equation}
\end{proposition}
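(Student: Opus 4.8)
The plan is to replace the doubly indexed, uncountable family of constraints in \eqref{l2:e0} by a single one-sided deviation estimate for a monotone partial-sum process, and then to read off the constant from a reverse-martingale maximal inequality. Write $Z_j:=(g^\delta_m-g^\dagger_m,u_{j,m})_{\R^m}/\delta$ for the noise coordinates in the singular basis (centered, unit variance, and i.i.d.\ in the situations of interest), and set $T_n:=\sum_{j=1}^n Z_j^2$ and $S_n:=T_n-n$. Then $T_n$ is nondecreasing with $\E T_n=n$, and $\omega\in\Omega_{t}$ precisely when $(1-\varepsilon)(l-k)\le T_l-T_k\le(1+\varepsilon)(l-k)$ for all $l\ge t$ and $0\le k\le l/2$.

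The first step is the purely deterministic inclusion
\begin{equation*}
\Bigl\{\,|S_n|\le\tfrac{\varepsilon}{3}\,n\ \text{ for all }n\ge n_0\,\Bigr\}\ \subseteq\ \Omega_{t},\qquad n_0:=\tfrac{2}{3}\tfrac{\varepsilon}{1+\varepsilon}\,t .
\end{equation*}
On the left-hand event, $(1-\tfrac{\varepsilon}{3})n\le T_n\le(1+\tfrac{\varepsilon}{3})n$ for $n\ge n_0$, which always applies to the index $l$ since $n_0<t$. For $k\ge n_0$ one gets $|T_l-T_k-(l-k)|\le\tfrac{\varepsilon}{3}(l+k)\le\varepsilon(l-k)$ from $l+k\le\tfrac{3}{2}l\le3(l-k)$. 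For $k<n_0$ one uses $0\le T_k\le(1+\tfrac{\varepsilon}{3})\lceil n_0\rceil$ together with monotonicity: the bound $T_l-T_k\le T_l\le(1+\tfrac{\varepsilon}{3})l\le(1+\varepsilon)(l-k)$ holds exactly because $1-n_0/l\ge 1-n_0/t=(1+\tfrac{\varepsilon}{3})/(1+\varepsilon)$, which is what forces the constant $\tfrac{2}{3}\tfrac{\varepsilon}{1+\varepsilon}$, and the matching lower bound is analogous. This is elementary but is the place where care with constants (and the slack from $\varepsilon\le\tfrac1{12}$, after rounding $n_0$ to an integer) is needed.

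The probabilistic step is a maximal inequality for $S_n/n$. With $W_j:=Z_j^2-1$ i.i.d.\ mean-zero and $\mathcal G_n:=\sigma(S_n,S_{n+1},\dots)=\sigma(S_n,W_{n+1},W_{n+2},\dots)$, exchangeability of $W_1,\dots,W_n$ given $(S_n,W_{n+1},W_{n+2},\dots)$ gives $\E[S_n/n\mid\mathcal G_{n+1}]=S_{n+1}/(n+1)$, so $(S_n/n,\mathcal G_n)_n$ is a reverse martingale with $S_{n_0}/n_0\in L^1$. Reversing the index on a finite block $n_0\le n\le N$ turns it into an ordinary martingale, so Doob's weak $(1,1)$ inequality together with $N\to\infty$ yields
\begin{equation*}
\PP\!\left(\sup_{n\ge n_0}\frac{|S_n|}{n}>\frac{\varepsilon}{3}\right)\ \le\ \frac{3}{\varepsilon}\,\E\!\left[\frac{|S_{n_0}|}{n_0}\right]\ =\ \frac{3}{\varepsilon}\,\E\!\left[\,\left|\frac1{n_0}\sum_{j=1}^{n_0}(Z_j^2-1)\right|\,\right]\ =\ p_\varepsilon(n_0),
\end{equation*}
and combining this with the inclusion above gives $\PP(\Omega_{t})\ge1-p_\varepsilon(n_0)=1-p_\varepsilon\!\bigl(\tfrac{2}{3}\tfrac{\varepsilon}{1+\varepsilon}t\bigr)$.

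I expect the reverse-martingale observation to be the crux: a naive union bound over the admissible pairs $(k,l)$ diverges, and it is exactly the martingale structure of $S_n/n$ that both controls the supremum and makes the bound come out as $p_\varepsilon(n_0)$ with constant $1$ rather than a larger multiple. The deterministic reduction is routine once the threshold $n_0$ is guessed; its one genuine hidden hypothesis is that the singular-basis coordinates $Z_j$ be i.i.d., which holds for Gaussian noise and for a discretization along the singular system.
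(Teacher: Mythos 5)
Your proof is correct and follows essentially the same route as the paper: the same reduction of $\Omega_t$ to the single-index event $\bigl\{|S_n|\le\tfrac{\varepsilon}{3}n\ \text{for all}\ n\ge\tfrac{2}{3}\tfrac{\varepsilon}{1+\varepsilon}t\bigr\}$ (the paper splits cases at $k\lessgtr\varepsilon' l$ rather than your $k\lessgtr\varepsilon' t$, but the arithmetic is identical), followed by the Kolmogorov--Doob maximal inequality for the backwards martingale $S_n/n$, which the paper cites from the literature rather than re-deriving. The distributional identification of the rotated noise coordinates $(g_m^\delta-g_m^\dagger,u_{j,m})/\delta$ with the original $\varepsilon_j$ that you flag as a hidden hypothesis is indeed used without comment in the paper's proof as well.
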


\begin{proof}[Proof of Proposition \ref{p0}]
  Define, for $\varepsilon':=\frac{2}{3}\frac{\varepsilon}{1+\varepsilon}$:
		\begin{equation*}
	 \Omega_{t}':=\left\{ \omega\in\Omega~:~\left|\frac{1}{l}\sum_{j=1}^l\left( (g_m^\delta(\omega)-g_m^\dagger,u_{j,m})_{\R^m}^2 - \delta^2\right)\right| \le \frac{\varepsilon}{3}\delta^2,~\forall l\ge \varepsilon' t \right\}.
	\end{equation*}
		Using the Kolmogorov--Doob inequality for backwards martingales one can prove
		that (see, e.g., Proposition 4.1 of \cite{jahn2022probabilistic})
		\begin{equation*}
	 \mathbb{P}\left(\Omega_t'\right) \ge 1- \frac{3}{\varepsilon}\E\left[ \left|\frac{1}{\varepsilon' t} \sum_{j=1}^{\varepsilon't} (\varepsilon_j^2 - 1)\right| \right] = 1-p_{\varepsilon}\left(\varepsilon' t\right).
	\end{equation*}
  It remains to show that $\Omega_t'\subset \Omega_t$. For this, we refine the
  argumentation in the proof of Proposition 3.1 of \cite{jahn2023noise}. For
  $l\ge t$ we distinguish two cases:
  \begin{enumerate}[label=\roman*).]
      \item If $l/2\ge k\ge \varepsilon' l$, then $k\ge
      \varepsilon't$ and thus
            \begin{align*}
                \sum_{j=k+1}^l\varepsilon_j^2\omkp& = \sum_{j=1}^l\varepsilon_j^2\omkp - \sum_{j=1}^k \varepsilon_j^2\omkp \le \left(1+\frac{\varepsilon}{3}\right)l - \left(1-\frac{\varepsilon}{3}\right)k \\
                & = (1+\varepsilon) (l-k) - \frac{2}{3}\varepsilon l + \frac{4}{3} \varepsilon k \le(1+\varepsilon)(l-k),
  	        \end{align*}
            since $k\le l/2$. Similarly, $\sum_{j=k+1}^l\varepsilon_j^2\omkp\ge
            (1-\varepsilon)(l-k)\omkp$.
        \item If $k<\varepsilon' l$, we obtain
            \begin{align*}
  	         \sum_{j=k+1}^l \varepsilon_j^2\omkp & \le \sum_{j=1}^l \varepsilon_j^2\omkp \le \left(1+\frac{\varepsilon}{3}\right) l  = (1+\varepsilon)(l-k) - \frac{2}{3}\varepsilon l + (1+\varepsilon)k\\
  	         &\le (1+\varepsilon)(l-k) - \frac{2}{3} \varepsilon l + (1+\varepsilon)\varepsilon'l = (1+\varepsilon)(l-k),
  	     \end{align*}
            by definition of $\varepsilon'$. Finally,
            \begin{align*}
                \sum_{j=k+1}^l\varepsilon_j^2\omkp&\ge \sum_{j=\varepsilon'l+1}^l\varepsilon_j^2\omkp \ge \left(1-\frac{\varepsilon}{3}\right) l\omkp - \left(1+\frac{\varepsilon}{3}\right)\varepsilon' l\omkp\\
                &= (1-\varepsilon)(l-k)\omkp + \left(\frac{2}{3}\varepsilon - \left(1+\frac{\varepsilon}{3}\right)\varepsilon'\right) l\omkp + (1-\varepsilon) k\omkp\\
                &= (1-\varepsilon)(l-k)\omkp + (1-\varepsilon)k \omkp\ge (1-\varepsilon)(l-k)\omkp.
  	        \end{align*}
  \end{enumerate}
    This proves $\Omega_t'\subset \Omega_t$ and therefore the claim \eqref{l2:e1}.
\end{proof}
\begin{remark}\label{r1}
If $l\ge t$, but $\frac{l}{2}<k\le l$, we will occasionally use the upper bound
\begin{equation*}
\sum_{j=k+1}^l(g_m^\delta-g_m^\dagger,u_{j,m})_{\R^m}^2\omk \le \sum_{j=1}^l(g_m^\delta-g_m^\dagger,u_{j,m})_{\R^m}^2\omk \le (1+\varepsilon)l\delta^2.
\end{equation*}
\end{remark}
We now prove the main results, fixing $\varepsilon\le\frac{1}{12}$.
   \begin{proof}[Proof of Theorem \ref{l2}]
		 Note that
	\begin{equation*}
	 (g_m^\dagger,u_{j,m})_{\R^m}=(K_mf^\dagger,u_{j,m})_{\R^m} = (f^\dagger,K_m^*u_{j,m}) = \sigma_{j,m}(f^\dagger,v_{j,m}).
\end{equation*}
	  We decompose the error as
		\begin{align}\notag
		f_{k^\delta_{{\rm gcv},m},m}^\delta - P_{\mathcal{N}^\perp(K_m)} f^\dagger &= \sum_{j=1}^{k^\delta_{{\rm gcv},m}} \frac{(g^\delta_m,u_{j,m})_{\R^m}}{\sigma_{j,m}} v_{j,m} - \sum_{j=1}^m (f^\dagger,v_{j,m}) v_{j,m}\\\label{err1}
		&= \sum_{j=1}^{k^\delta_{{\rm gcv},m}} \frac{(g_m^\delta-g_m^\dagger,u_{j,m})_{\R^m}}{\sigma_{j,m}} v_{j,m} - \sum_{j={k^\delta_{{\rm gcv},m}}+1}^m(f^\dagger,v_{j,m})v_{j,m}.
	\end{align}
In order to bound the first term, we claim an upper bound for the chosen index: For $t\le
t_m^\delta$ it holds that
\begin{equation}\label{claim1}
    k^\delta_{{\rm gcv},m}\omd \le \frac{t_m^\delta}{\varepsilon^2}.
    \end{equation}
    We prove the claim by showing that for all
    $\frac{t_m^\delta}{\varepsilon^2}<k\le \frac{m}{2}$ there holds
    \begin{equation}\label{p1:e1}
        \Psi_m(t_m^\delta)\omd< \Psi_m(k).
    \end{equation}
First, observe that
		\begin{align*}
		 &\Psi_m(t_m^\delta)\omd =\frac{\sum_{j=t_m^\delta+1}^m(g^\delta_m,u_{j,m})_{\R^m}^2}{\left(1-\frac{t^\delta_m}{m}\right)^2}\omd \\
     &~= \frac{\sum_{j=t^\delta_m+1}^k(g_m^\delta,u_{j,m})_{\R^m}^2}{\left(1-\frac{t^\delta_m}{m}\right)^2}\omd + \frac{\sum_{j=k+1}^m(g_m^\delta,u_{j,m})_{\R^m}^2}{\left(1-\frac{t^\delta_m}{m}\right)^2}\omd\\
		 &~\le \frac{\left(\sqrt{\sum_{j=t^\delta_m+1}^k(g_m^\delta - g_m^\dagger,u_{j,m})_{\R^m}^2} + \sqrt{\sum_{j=t^\delta_m+1}^k(g_m^\dagger,u_{j,m})_{\R^m}^2}\right)^2} {\left(1-\frac{t^\delta_m}{m}\right)^2}\omd  \\
     &\qquad + \left(\frac{1-\frac{k}{m}}{1-\frac{t^\delta_m}{m}}\right)^2 \Psi_m(k)\\
	    &~\le \frac{\left((1+\varepsilon)\sqrt{k}\delta + \sqrt{t^\delta_m}\delta\right)^2}{\left(1-\frac{t^\delta_m}{m}\right)^2} + \left(\frac{m-k}{m-t^\delta_m}\right)^2 \Psi_m(k)\\
	   &~\le \frac{\left((1+\varepsilon)\sqrt{k}\delta + \sqrt{\varepsilon^2k}\delta\right)^2}{\left(1-\frac{t^\delta_m}{m}\right)^2} + \left(\frac{m-k}{m-t^\delta_m}\right)^2 \Psi_m(k)\\
	   &~\le \frac{(1+2\varepsilon)^2k\delta^2}{\left(1-\frac{t^\delta_m}{m}\right)^2} + \left(\frac{m-k}{m-t^\delta_m}\right)^2 \Psi_m(k)
		\end{align*}
		Observe that $k\le m-k$ and $t_m^\delta\le \varepsilon^2 k$. Then, on the
		other hand,
		\begin{align*}
			\Psi_m(k)\omd &\ge \frac{\left(\sqrt{\sum_{j=k+1}^m(g_m^\delta - g_m^\dagger,u_{j,m})_{\R^m}^2}-\sqrt{\sum_{j=k+1}^m(g_m^\dagger,u_{j,m})_{\R^m}^2}\right)^2}{\left(1-\frac{k}{m}\right)^2}\omd\\
			&\ge \frac{\left((1-\varepsilon)\sqrt{m-k} \delta-\sqrt{t_m^\delta}\delta\right)^2}{\left(1-\frac{k}{m}\right)^2}\omd\ge \frac{\left((1-\varepsilon)\sqrt{m-k}\delta-\varepsilon\sqrt{k}\delta\right)^2}{\left(1-\frac{k}{m}\right)^2}\omd\\
			&\ge \frac{\left((1-\varepsilon)\sqrt{m-k}\delta-\varepsilon\sqrt{m-k}\delta\right)^2}{\left(1-\frac{k}{m}\right)^2}\omd\ge \frac{(1-2\varepsilon)^2 (m-k)\delta^2}{\left(1-\frac{k}{m}\right)^2}\omd
		\end{align*}
				We solve the second inequality for $\delta$ and plug into the first equation and obtain
				\begin{align*}
			\Psi_m(t_m^\delta)\omd &\le \frac{(1+2\varepsilon)^2k\delta^2}{\left(1-\frac{t^\delta_m}{m}\right)^2}\omd + \left(\frac{m-k}{m-t^\delta_m}\right)^2 \Psi_m(k)\\
			&\le \frac{(1+2\varepsilon)^2k}{\left(1-\frac{t_m^\delta}{m}\right)^2} \frac{\left(1-\frac{k}{m}\right)^2}{(1-2\varepsilon)^2(m-k)}\Psi_m(k) + \left(\frac{m-k}{m-t_m^\delta}\right)^2\Psi_m(k)\\
			&= \Psi_m(k)\frac{m-k}{(m-t_m^\delta)^2}\left(k \left(\frac{1+2\varepsilon}{1-2\varepsilon}\right)^2+m-k\right)\\
			&=\Psi_m(k)\frac{m^2 -\left(2-\left(\frac{1+2\varepsilon}{1-2\varepsilon}\right)^2\right) mk -\left( \left(\frac{1+2\varepsilon}{1-2\varepsilon}\right)^2-1\right) k^2}{m^2 - 2m t_m^\delta+ {t_m^\delta}^2}\\		&< \Psi_m(k)\frac{m^2 -\left(2-\left(\frac{1+2\varepsilon}{1-2\varepsilon}\right)^2\right) mk}{m^2 - 2m t_m^\delta}<\Psi_m(k),
		\end{align*}
				since
					\begin{align*}
			\frac{2-\left(\frac{1+2\varepsilon}{1-2\varepsilon}\right)^2}{2}\frac{k}{t_m^\delta}\ge\frac{2-\left(\frac{1+2\varepsilon}{1-2\varepsilon}\right)^2}{2\varepsilon^2}> 1
		\end{align*}
			for $\varepsilon\le 1/12$. This proves \eqref{p1:e1} and thus claim \eqref{claim1}.

            Therefore, applying Remark \ref{r1} and the claim \eqref{claim1}, we can bound the norm of the first term in \eqref{err1} as follows:
    \begin{align}\notag
		\sum_{j=1}^{k^\delta_{{\rm gcv},m}} \frac{\left(g_m^\delta-g_m^\dagger,u_{j,m}\right)^2_{\R^m}}{\sigma_{j,m}^2}\omd \quad &\le \quad \frac{1}{\sigma_{k_{\rm gcv},m}^2} \sum_{j=1}^{k^\delta_{{\rm gcv},m}} (g_m^\delta-g_m^\dagger,u_{j,m})^2_{\R^m}\omd \\
        \le \quad (1+\varepsilon) \frac{k^\delta_{{\rm gcv},m}\delta^2}{\sigma_{k_{{\rm gcv},m}}^2}\omd\label{err2a}
		\quad &\le \quad \frac{1+\varepsilon}{\varepsilon^2} \frac{t_m^\delta\delta^2}{\sigma_{\frac{t_m^\delta}{\varepsilon^2}}^2}.
	\end{align}
In order to bound the second term in \eqref{err1} we first claim that
		\begin{equation}\label{claim2a}
		\sum_{j=k^\delta_{{\rm gcv},m}+1}^m(g_m^\dagger,u_{j,m})_{\R^m}^2 \omd \le C_a t_m^\delta \delta^2
		\end{equation}
				with $C_a:=35+34 \varepsilon$. We prove the claim. 		Since $C_a \ge 1$ the
				assertion clearly holds for $k_{{\rm gcv},m}^\delta \omd > t_m^\delta$ by
				definition of $t_m^\delta$. So  assume $k^\delta_{{\rm gcv},_m}<
				t_m^\delta$ and recall that $t_m^\delta\le m/2$. Then, by definition of
				$t_m^\delta$,
            \begin{align*}
			     ~&\sum_{j=k^\delta_{{\rm
			     gcv},_m}+1}^{m}(g_m^\dagger,u_{j,m})_{\R^m}^2\omd
			     = \sum_{j=k^\delta_{{\rm gcv},_m}+1}^{t_m^\delta}
			     (g_m^\dagger,u_{j,m})_{\R^m}^2\omd
			     +\sum_{j=t_m^\delta+1}^m(g_m^\dagger,u_{j,m})_{\R^m}^2\\
                 \le ~& \sum_{j=k_{{\rm gcv},m}^\delta+1}^{t_m^\delta}
                 (g_m^\dagger,u_{j,m})_{\R^m}^2\omd + t_m^\delta \delta^2\\
			     \le~& 2 \sum_{j=k_{{\rm gcv},m}+1}^{t_m^\delta}(g^\delta_m,u_{j,m})_{\R^m}^2 + 2 \sum_{j=k^\delta_{{\rm gcv},m}+1}^{t_m^\delta}(g_m^\delta-g_m^\dagger,u_{j,m})_{\R^m}^2\omd + t_m^\delta\delta^2\\
			     \le~& 2\sum_{j=k^\delta_{{\rm gcv},m}+1}^{t_m^\delta}(g^\delta_m,u_{j,m})_{\R^m}^2 + (3+2\varepsilon) t_m^\delta \delta^2.
		      \end{align*}
			Because
			\begin{align*}
			     \Psi_m(k^\delta_{{\rm gcv},m}) &= \frac{\sum_{j=k^\delta_{{\rm gcv},m}+1}^m(g^\delta_m,u_{j,m})_{\R^m}^2}{\left(1-\frac{k^\delta_{{\rm gcv},m}}{m}\right)^2} \\
                &= \frac{\sum_{j=k^\delta_{{\rm gcv},m}+1}^{t_m^\delta}(g^\delta_m,u_{j,m})_{\R^m}^2}{\left(1-\frac{k^\delta_{{\rm gcv},m}}{m}\right)^2}  + \frac{\sum_{j=t^\delta_m+1}^{m}(g^\delta_m,u_{j,m})_{\R^m}^2}{\left(1-\frac{k^\delta_{{\rm gcv},m}}{m}\right)^2} \\
			     &= \frac{\sum_{j=k^\delta_{{\rm gcv},m}+1}^{t_m^\delta}(g^\delta_m,u_{j,m})_{\R^m}^2}{\left(1-\frac{k^\delta_{{\rm gcv},m}}{m}\right)^2}  + \left(\frac{m-t_m^\delta}{m-k^\delta_{{\rm gcv},m}}\right)^2\Psi_m(t_m^\delta)
		      \end{align*}
			we conclude, since $k^\delta_{{\rm gcv},m}$ is the minimizer of $\Psi_m$ on $0\le k\le m/2$ and $t_m^\delta\le \frac{m}{2}$,
			\begin{align*}
			     &\sum_{j=k^\delta_{{\rm gcv},m}+1}^{t_m^\delta}(g^\delta_m,u_{j,m})_{\R^m}^2\omd\\
			     =~&\left(1-\frac{k^\delta_{{\rm gcv},m}}{m}\right)^2\Psi_m(k^\delta_{{\rm gcv},m})\omd - \left(1-\frac{t_m^\delta}{m}\right)^2\Psi_m(t_m^\delta)\omd\\
			     \le~&\left(1-\frac{k^\delta_{{\rm gcv},m}}{m}\right)^2\Psi_m(t_m^\delta)\omd - \left(1-\frac{t_m^\delta}{m}\right)^2\Psi_m(t_m^\delta)\omd\\
			     =~&\frac{\Psi_m(t_m^\delta)}{m}\left(2t_m^\delta-2k^\delta_{{\rm gcv},m} +\frac{{k^\delta_{{\rm gcv},m}}^2-{t_m^\delta}^2}{m}\right)\omd\le \frac{2t_m^\delta\Psi_m(t_m^\delta)}{m}\omd\\
			     =~&\frac{2 t_m^\delta}{m} \frac{\sum_{j=t_m^\delta+1}^m(g^\delta_m,u_{j,m})_{\R^m}^2}{\left(1-\frac{t_m^\delta}{m}\right)^2}\\
			     \le~& \frac{4t_m^\delta}{m} \frac{\sum_{j=t_m^\delta+1}^m(g_m^\delta-g_m^\dagger,u_{j,m})_{\R^m}^2\omd + \sum_{j=t_m^\delta+1}^m(g_m^\dagger,u_{j,m})_{\R^m}^2}{\left(1-\frac{t_m^\delta}{m}\right)^2}\\
			     \le~& \frac{4t_m^\delta}{m}\frac{(1+\varepsilon)(m-t_m^\delta)\delta^2 + t_m^\delta\delta^2}{\left(1-\frac{t_m^\delta}{m}\right)^2} \le 4(1+\varepsilon) \frac{t_m^\delta \delta^2}{\frac{1}{2^2}} = 16(1+\varepsilon) t_m^\delta \delta^2.
		      \end{align*}
			Putting everything together we obtain
			\begin{align*}
			     \sum_{j=k^\delta_{{\rm gcv},m}+1}^{m}(g_m^\dagger,u_{j,m})_{\R^m}^2\omd &\le 32(1+\varepsilon)t_m^\delta\delta^2 + (3+2\varepsilon)t_m^\delta\delta^2 &&\\
                 &= (35+34 \varepsilon) t_m^\delta \delta^2 = C_a t_m^\delta \delta^2 
		      \end{align*}
              and thus proved claim \eqref{claim2a}.

We are now in a position to bound the norm of the second term in \eqref{err1}. We
have
		\begin{align}\notag
		\sum_{j=k^\delta_{{\rm gcv},m}+1}^m(f^\dagger,v_{j,m})\omd&=\sum_{j=k^\delta_{{\rm gcv},m}+1}^{s_m^\delta}(f^\dagger,v_{j,m})^2\omd  + \sum_{j=s_m^\delta+1}^m(f^\dagger,v_{j,m})^2\\\notag
		&\le \frac{1}{\sigma_{s^\delta_m,m}^2}\sum_{k^\delta_{{\rm gcv},m}+1}^{s_m^\delta} (g^\dagger,u_{j,m})^2_{\R^m}\omd	 +\frac{s_m^\delta \delta^2}{\sigma_{s_m^\delta,m}^2}\\\notag
		&\le \frac{1}{\sigma_{s^\delta_m,m}^2}\sum_{k^\delta_{{\rm gcv},m}+1}^{m} (g^\dagger,u_{j,m})^2_{\R^m}\omd	 +\frac{s_m^\delta \delta^2}{\sigma_{s_m^\delta,m}^2}\\\label{err2b}
        &\le \frac{C_a t_m^\delta + s_m^\delta}{\sigma^2_{s_m^\delta,m}}\delta^2.
	\end{align}
		Combining \eqref{err2a} and \eqref{err2b} with $t_m^\delta(f^\dagger)\le
		s_m^\delta(f^\dagger)$, we can bound the norm of the error \eqref{err1} and
		conclude
		\begin{align*}
		\left\|f_{k^\delta_{{\rm gcv},m},m}^\delta - P_{\mathcal{N}^\perp(K_m)}
		f^\dagger\right\|\omd &\le  \frac{L_s\sqrt{s_m^\delta}
		\delta}{\sigma_{\frac{s_m^\delta}{\varepsilon^2},m}},
	\end{align*}
		where $L_s:=\frac{\sqrt{1+\varepsilon}}{\varepsilon} + \sqrt{C_a+1}$. This
		completes the proof of Theorem \ref{l2}.
	\end{proof}

\subsection{Proof of Corollary \ref{cor1}}
For this discretization we have $\sigma_{j,m}=\sigma_j$ and $v_{j,m}=v_j$,
$u_{j,m}=u_j$ for $j=1,...,m$. Consequently, 
\begin{align*}
\|f^\delta_{k_{\rm gcv},m}
-f^\dagger\|^2 &= \|f^\delta_{k_{\rm gcv},m} - P_{\mathcal{N}(K_m)^\perp}
f^\dagger\|^2 + \sum_{j=m+1}^\infty (f^\dagger,v_j)^2 \\
&\le \|f_{k_{\rm gcv},m}-
P_{\mathcal{N}(K_m)^\perp}f^\dagger\|^2 + \frac{s_m^\delta
\delta^2}{\sigma_{s_m^\delta}^2}
\end{align*}
and with Theorem \ref{l2} we deduce
\begin{align*}
    \mathbb{P}\left(\|f^\delta_{k_{\rm gcv},m} - f^\dagger\| \le \sqrt{L_s^2c_q^{-1}\varepsilon^{-2q}+2^{2q}+1} \frac{\sqrt{s_m^\delta} \delta}{\sigma_{s_m^\delta}}\right) \ge 1- p_\varepsilon\left(\frac{2}{3}\frac{\varepsilon}{1+\varepsilon} t\right).
\end{align*}
The proof follows by observing that
\begin{align*}
&\E\|f^\delta_{k,m} - f^\dagger\|^2 = \delta^2\sum_{j=1}^k \sigma_j^{-2} + \sum_{j=k+1}^\infty (f^\dagger,v_j)^2 \ge \frac{\delta^2 (k+1)^{q+1}}{(q+1)C_q} + \sum_{j=k+1}^\infty (f^\dagger,v_j)^2 \\
&~= \frac{2^q}{q+1} \frac{k\delta^2}{\sigma_k^2}+\sum_{j=k+1}^\infty (f^\dagger,v_j)^2 \ge \frac{1}{2}\frac{s_m^\delta \delta^2}{\sigma_{s_m^\delta}^2},
\end{align*}
where the last inequality follows from the fact that $(a_k+b_k)_{k\in\N}$, where $a_k$ is (polynomially) monotonically increasing and $b_k$ is monotonically decreasing, is lower bounded by $\frac{1}{2}(a_{k^*}+b_{k^*})$, with the balancing index $k^*:= \min \{k\ge 0~:~ a_k \ge b_k\}$.

\subsection{Proof of Theorem \ref{t0}}
The proof requires bounds for the discretization
error $\|P_{\mathcal{N}(K_m)^\perp} f^\dagger - f^\dagger\|$ and fo the balancing oracle $s_m^\delta$.

We start with formulating the following technical claim, which will be proved in
the appendix: For $j=t(m+1)+s$ with $m\in\N, t\in\N_0$ and $s\in\{0,...,m\},
k\in\{1,...,m\}$, we have
    \begin{equation}\label{claim:sv}
    (v_j,v_{k,m}) = \sqrt{m+1}\frac{\sigma_j}{\sigma_{k,m}} \begin{cases} 1 &\quad \mbox{for }s=k~\mbox{and}~t~\mbox{even,}  \\
        -1 &\quad \mbox{for }s+k=m+1~\mbox{and}~t~\mbox{odd,}\\
        0 &\quad \mbox{else.}\end{cases}
        \end{equation}
We use this claim to derive the bound on the balancing oracles.	It holds that
		\begin{align}\label{claim:oracle}
		\sup_{f\in \mathcal{X}_{\nu,\rho}} s_m^\delta(f) &\le C_s  \left(\frac{(m+1)\rho^2}{\delta^2}\right)^\frac{1}{5+8s},\\\label{claim:oracle1}
		\sup_{f\in \mathcal{X}_{\nu,\rho}} t_m^\delta(f) &\le C_s  \left(\frac{(m+1)\rho^2}{\delta^2}\right)^\frac{1}{5+8s}.
	\end{align}
		with $C_s$ given below.
		By \eqref{claim:sv}, it holds that
		\begin{align*}
		v_{j,m} &= \sum_{l=1}^\infty (v_{j,m},v_l) v_l\\
		 &=\sqrt{m+1}\frac{\sigma_j}{\sigma_{j,m}} v_j - \sqrt{m+1}\sum_{t=1}^\infty   \frac{\sigma_{2t(m+1)-j} v_{2t(m+1)-j}-\sigma_{2t(m+1)+j} v_{2t(m+1)+j}}{\sigma_{j,m}}.
	\end{align*}
		Therefore, with $f^\dagger=\sum_{j=1}^\infty \varphi(\sigma_j^2)(h,v_j) v_j =:\sum_{j=1}^\infty f_j v_j$, we obtain
		\begin{align*}
		&(f^\dagger,v_{j,m}) = \sum_{l=1}^\infty f_l (v_l,v_{j,m}) \\
        &= \frac{\sqrt{m+1}}{\sigma_{j,m}}\Bigg( \sigma_j f_j  - \sum_{t=1}^\infty\left(\sigma_{2t(m+1)-j}f_{2t(m+1)-j} - \sigma_{2t(m+1)+j}f_{2t(m+1)+j}\right)\Bigg)\\
		&=\varphi_s(\sigma_{j,m}^2) \sqrt{m+1} \frac{\sigma_j\varphi_s(\sigma_j^2)}{\sigma_{j,m}\varphi_s(\sigma_{j,m}^2)} *\\
        &\quad *\Bigg(\vphantom{\sum_{t=1}^\infty\frac{1}{2}} (h,v_j)  - \sum_{t=1}^\infty\frac{\sigma_{2t(m+1)-j}\varphi_s(\sigma_{2t(m+1)-j})(h,v_{2t(m+1)-j})} {\sigma_j \varphi_s(\sigma_j^2)} \\
        &\quad \qquad \qquad + \sum_{t=1}^\infty \frac{ \sigma_{2t(m+1)+j}\varphi_s(\sigma_{2t(m+1)+j})(h,v_{2t(m+1)+j})}{\sigma_j \varphi_s(\sigma_j^2)} \Bigg)
        =:\varphi_s(\sigma_{j,m}^2)w_{j,m,s}.
	\end{align*}
		Using the Cauchy-Schwarz-inequality gives
		\begin{align*}
		\Bigg(w_{j,m,s}&\frac{\sigma_{j,m}\varphi_s(\sigma_{j,m}^2)}{\sqrt{m+1}\sigma_j\varphi_s(\sigma_j^2)}\Bigg)^2 \le &&\\
        &2(h,v_j)^2
		  + 2 \left(\sum_{t=1}^\infty  \Bigg(\frac{\sigma_{2t(m+1)-j}^2}{\sigma_j^2}\right)^\frac{s+1}{2}|(h,v_{2t(m+1)-j})| &\\
         & \qquad + \left(\frac{\sigma_{2t(m+1)+j}^2}{\sigma_j^2}\right)^\frac{s+1}{2}|(h,v_{2t(m+1)+j})|\Bigg)^2.&
		 \end{align*}
	 	 For the second term, using again Cauchy-Schwarz and the fact that $1/(x-1)\le 2/x$ for any $x\ge 2$, we further obtain
	 		 \begin{align*}
	 		 	&\left(\sum_{t=1}^\infty \left(\frac{\sigma_{2t(m+1)-j}^2}{\sigma_j^2}\right)^\frac{s+1}{2}|(h,v_{2t(m+1)-j})| + \left(\frac{\sigma_{2t(m+1)+j}^2}{\sigma_j^2}\right)^\frac{s+1}{2}|(h,v_{2t(m+1)+j})|\right)^2\\
	 	&~=  \left(\sum_{t=1}^\infty \left(\frac{1}{2t\frac{m+1}{j} -1}\right)^{2s+2}|(h,v_{2t(m+1)-j})| + \left(\frac{1}{2t\frac{m+1}{j}+1}\right)^{2s+2}|(h,v_{2t(m+1)+j})|\right)^2\\
	 	&~\le  \left(\sum_{t=1}^\infty \left(\frac{1}{2t\frac{m+1}{j} -1}\right)^{4s+4} + \left(\frac{1}{2t\frac{m+1}{j}+1}\right)^{4s+4}\right) \times \\
        &~\qquad \times \left( \sum_{t=1}^\infty (h,v_{2t(m+1)-j})^2 +(h,v_{2t(m+1)+j})^2\right)\\
            &~\le \left(\sum_{t=1}^\infty \left(\frac{2}{2t\frac{m+1}{j} }\right)^{4s+4} + \left(\frac{1}{2t\frac{m+1}{j}}\right)^{4s+4}\right)\left( \sum_{t=1}^\infty (h,v_{2t(m+1)-j})^2 +(h,v_{2t(m+1)+j})^2\right)\\
	 	&~\le  2 \left( \sum_{t=1}^\infty t^{-4s-4} \right)\left( \sum_{t=1}^\infty (h,v_{2t(m+1)-j})^2 +(h,v_{2t(m+1)+j})^2\right)\\
	 	&~\le \frac{2}{4s+3} \left( \sum_{t=1}^\infty (h,v_{2t(m+1)-j})^2 +(h,v_{2t(m+1)+j})^2\right)
	 \end{align*}
  and finally
 		 \begin{align*}
	 	\Bigg((h,v_j)\vphantom{\sum_{t=1}^\infty\frac{1}{2}}
		 	&- \sum_{t=1}^\infty\frac{\sigma_{2t(m+1)-j}\varphi_s(\sigma_{2t(m+1)-j})(h,v_{2t(m+1)-j})}{\sigma_j \varphi_s(\sigma_j^2)}\\
            &+\sum_{t=1}^\infty \frac{\sigma_{2t(m+1)+j}\varphi_s(\sigma_{2t(m+1)+j})(h,v_{2t(m+1)+j})}{\sigma_j \varphi_s(\sigma_j^2)}\Bigg)^2\\
		\le &2\left((h,v_j)^2 + \sum_{t=1}^\infty (h,v_{2t(m+1)-j})^2 +(h,v_{2t(m+1)+j})^2\right).
	\end{align*}
		Moreover, we use $\sin^2(x)\in[0,1]$ and $\sin(x)\le x$  and obtain
		\begin{align*}
		(m+1) \frac{\sigma_j^2\varphi_s^2(\sigma_j^2)}{\sigma_{j,m}^2\varphi_s^2(\sigma_{j,m}^2)}&= (m+1)\left(\frac{\sigma_j^2}{\sigma_{j,m}^2}\right)^{s+1} \\
        &= (m+1)\left(\frac{16(m+1)^3\sin^4\left(\frac{j\pi}{2(m+1)}\right)}{\pi^4 j^4 \left(1-\frac{2}{3}\sin^2\left(\frac{j\pi}{2(m+1)}\right)\right)}\right)^{s+1}\\
		&\le (m+1)\left(\frac{3}{(m+1)}\right)^{s+1}= \frac{3^{s+1}}{(m+1)^{s}},
	\end{align*}
    thus
    \begin{equation}\label{est:coef}
    w_{j,m,s}^2 \le    \frac{2*3^{s+1}}{(m+1)^s} \left((h,v_j)^2 +  \sum_{t=1}^\infty (h,v_{2t(m+1)-j})^2 +(h,v_{2t(m+1)+j})^2\right)
    \end{equation}
		Putting all estimates together yields
	\begin{align}\notag
		&\sum_{j=k+1}^m(f,v_{j,m})^2\\\notag
		 \le~& 2* 3^{s+1} \frac{\varphi_s^2\left(\sigma_{k+1,m}^2\right)}{(m+1)^{s}} \sum_{j=k+1}^m\left((h,v_j)^2 +  \sum_{t=1}^\infty (h,v_{2t(m+1)-j})^2 +(h,v_{2t(m+1)+j})^2\right)\\\notag
		\le~& \frac{2* 3^{s+1}}{(m+1)^{s}} \varphi_s^2\left(\frac{1-\frac{2}{3}\sin^2\left(\frac{(k+1)\pi}{2(m+1)}\right)}{16(m+1)^3\sin^4\left(\frac{(k+1)\pi}{2(m+1)}\right)}\right) \sum_{l=k+1}^\infty(h,v_l)^2\\\label{e000}
		\le~& \frac{2* 3^{s+1}}{(m+1)^{s}} \varphi_s^2\left(\frac{m+1}{2^4(k+1)^4}\right) \rho^2 = \frac{3^{s+1}}{2^{4s-1}} k^{-4s} \rho^2,
	\end{align}
		where we used that $\sin(x)\ge \frac{2}{\pi}x$ for $0\le x\le \frac{\pi}{2}$ in the third step and the fact that for every $l\ge m+1$ there is at most one pair $(j,t)$ such that $l=2t(m+1)-j$ or $l=2t(m+1)+j$ in the second step. Therefore, on the one hand,
		\begin{align*}
		\sup_{f^\dagger\in\mathcal{X}_{s,\rho}} \sum_{j=k+1}^\infty (f^\dagger,v_{j,m})^2\le \frac{3^{s+1}}{2^{4s-1}}k^{-4s}\rho^2,
	\end{align*}
		while on the other hand
		\begin{align*}
		\frac{k \delta^2}{\sigma_{k,m}^2}& = \frac{16 (m+1)^3\sin^4\left(\frac{k\pi}{2(m+1)}\right)}{1-\frac{2}{3}\sin^2\left(\frac{k\pi}{2(m+1)}\right)} k \delta^2\le \frac{16 \pi^4 k^4}{\frac{1}{3} 2^4(m+1)} k \delta^2 = 3 \pi^4 \frac{k^5\delta^2}{m+1}.
	\end{align*}
		Consequently,
		\begin{align*}
		3\pi^4 \frac{k \delta^2}{m+1} &\stackrel{!}{\le} \frac{3^{s+1}}{2^{4s-1}} k^{-4s} \rho^2\\
		\Longrightarrow\quad k&\le C_s \left(\frac{(m+1)\rho^2}{\delta^2}\right)^\frac{1}{5+4s}
	\end{align*}
		with
		\begin{equation}\label{t0c1}
	C_s:=\left(\frac{3^{s}}{2^{4s-1}\pi^4}\right)^\frac{1}{5+4s}.
	\end{equation}
	 We conclude
		\begin{equation*}
		\sup_{f^\dagger\in\mathcal{X}_{s,\rho}} s_m^\delta(f^\dagger)\le C_s  \left(\frac{(m+1)\rho^2}{\delta^2}\right)^\frac{1}{5+4s}.
	\end{equation*}
	With similar arguments we also get
	\begin{equation*}
		\sup_{f^\dagger\in\mathcal{X}_{s,\rho}} t_m^\delta(f^\dagger)\le C_s  \left(\frac{(m+1)\rho^2}{\delta^2}\right)^\frac{1}{5+4s},
	\end{equation*}
and the proof of claims \eqref{claim:oracle} and \eqref{claim:oracle1} is finished.

 	Thus, for $\delta$ not too large we have $t_m^\delta(f^\dagger)\le m/2$. Applying Theorem \ref{l2}, for $t_m^\delta\ge t$ we therefore obtain, with \eqref{e000},
		\begin{align}\notag
		&\| f^\delta_{k^\delta_{{\rm gcv},m},m} - P_{\mathcal{N}(K_m)^\perp}  f^\dagger\|\omd\\
		 \le~&	\frac{L_s\sqrt{s^\delta_m }\delta }{\sigma_{\frac{s^\delta_m}{\varepsilon^2},m}}\le \frac{\sqrt{3}L_s \pi^2}{\varepsilon^4} {s_m^\delta}^\frac{5}{2}\frac{\delta}{\sqrt{m+1}}\le\frac{\sqrt{3}C_s^\frac{5}{2} L_s \pi^2}{\varepsilon^4} \left(\frac{(m+1)\rho^2}{\delta^2}\right)^\frac{5}{2(5+4s)}\frac{\delta}{\sqrt{m+1}}\\\label{t0:e1}
		=~ &L_s' \left(\frac{\delta}{\sqrt{m+1}}\right)^\frac{4s}{5+8s} \rho^\frac{5}{5+4s},
	\end{align}
		with
		\begin{equation}\label{t0c}
	L_s:= \frac{\sqrt{3}C_s^\frac{5}{2} L_s\pi^2}{\varepsilon^4}.
	\end{equation}
		Finally, we treat the discretization error $\|P_{\mathcal{N}^\perp(K_m)} f^\dagger- f^\dagger\|$. First, by definition of $\kappa$ we see that the span $<v_{1,m},...,v_{m,m}>$ is equal to the space of piece-wise linear functions on the grid $\xi_{1,m},...,\xi_{m,m}$,  and  $f_m^\dagger=P_{\mathcal{N}(K_m)^\perp}f^\dagger$ is the $L^2$-projection of $f^\dagger$ onto that space.  The error depends on classical smoothness of $f^\dagger$ which can be related to the H\"older source condition in Proposition \ref{p4} below.
Then, classical estimates for the linear interpolating spline yield the following bound for the discretization error,
		\begin{align}\label{t0e2}
		\|P_{\mathcal{N}(K_m)^\perp} f^\dagger-f^\dagger\|_{L^2}&\le \begin{cases} \frac{\|(f^\dagger)'\|_{L^2}}{\sqrt{2}(m+1)},\quad &\mbox{for } s\ge\frac{3}{4}\\
			\frac{\|(f^\dagger)''\|_{L^2}}{2(m+1)^2},\quad &\mbox{for } s\ge \frac{5}{4}
		\end{cases}.
	\end{align}
	Finally, plugging the estimates \eqref{t0:e1} and \eqref{t0e2} into the decomposition
\begin{align*}
\|f^\delta_{k^\delta_{\rm gcv},m} - f^\dagger\|\omk &\le \|f^\delta_{k^\delta_{\rm gcv},m} - P_{\mathcal{N}(K_m)^\perp} f^\dagger\|\omk +  \|P_{\mathcal{N}(K_m)^\perp} f_m^\dagger- f^\dagger\|\omk
\end{align*}
		and applying Proposition \ref{p0} finishes the proof of
		Theorem \ref{t0}.

        			\begin{proposition}\label{p4}
		Assume that $f^\dagger \in\mathcal{X}_{s,\rho}$. If $s>\frac{3}{4}$, then $f^\dagger$ is differentiable. if $s>\frac{5}{4}$, then $f^\dagger$ is twice differentiable.
	\end{proposition}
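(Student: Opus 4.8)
\emph{Proof plan.} The plan is to expand $f^\dagger$ in the sine basis furnished by Proposition~\ref{l1} and to differentiate term by term, controlling the resulting coefficient sequences by the Cauchy--Schwarz inequality; the regularity threshold will then come out of the elementary convergence criterion for $p$-series.

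First I would record the expansion. By Proposition~\ref{l1}, $v_k(x)=\sqrt2\,\sin(\pi k x)$ and $(K^*K)v_k=\sigma_k^2 v_k$ with $\sigma_k=\pi^{-2}k^{-2}$, so $(K^*K)^{s/2}v_k=\sigma_k^{s}v_k$. Hence every $f^\dagger=(K^*K)^{s/2}h\in\mathcal X_{s,\rho}$ has the representation $f^\dagger=\sum_{k\ge1}c_k v_k$ with $c_k=\sigma_k^{s}h_k=\pi^{-2s}k^{-2s}h_k$, where $h_k:=(h,v_k)$ satisfies $\sum_k h_k^2=\|h\|^2\le\rho^2$.

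Next, for $s>\tfrac34$: the formal termwise derivative is $\sum_k c_k\sqrt2\,\pi k\cos(\pi k x)$, whose coefficients are, up to the factor $\sqrt2$, of size $\pi^{1-2s}k^{1-2s}|h_k|$. By Cauchy--Schwarz, $\sum_k k^{1-2s}|h_k|\le\rho\,\bigl(\sum_k k^{2-4s}\bigr)^{1/2}$, and the $p$-series $\sum_k k^{2-4s}$ converges exactly when $2-4s<-1$, i.e.\ when $s>\tfrac34$. Thus both $\sum_k c_k v_k$ and its termwise derivative converge absolutely and uniformly on $[0,1]$, so by the classical theorem on termwise differentiation of a uniformly convergent series $f^\dagger$ agrees a.e.\ with a continuously differentiable function whose derivative is that uniform limit. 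As a byproduct, Parseval gives $\|(f^\dagger)'\|_{L^2}^2=\pi^{2-4s}\sum_k k^{2-4s}h_k^2\le\pi^{2-4s}\rho^2<\infty$ (using $k^{2-4s}\le1$ for $s\ge\tfrac12$), which is the quantity entering Theorem~\ref{t0}. For $s>\tfrac54$ I would push the same argument one step further: the second termwise derivative $-\sum_k c_k\sqrt2\,(\pi k)^2\sin(\pi k x)$ has coefficients of size $\pi^{2-2s}k^{2-2s}|h_k|$, and $\sum_k k^{2-2s}|h_k|\le\rho\,\bigl(\sum_k k^{4-4s}\bigr)^{1/2}<\infty$ precisely when $4-4s<-1$, i.e.\ $s>\tfrac54$; the same uniform-convergence reasoning then yields $f^\dagger\in C^2$ with $\|(f^\dagger)''\|_{L^2}^2=\pi^{4-4s}\sum_k k^{4-4s}h_k^2\le\pi^{4-4s}\rho^2<\infty$.

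The only genuinely delicate point — and the one I would treat with care — is that $f^\dagger$ is a priori only an $L^2$-equivalence class, so one must exhibit a (twice) classically differentiable representative rather than merely argue membership in $H^1$ (resp.\ $H^2$). This is exactly why the Cauchy--Schwarz estimates above are arranged to give $\ell^1$, hence \emph{uniform}, control of the differentiated series instead of only $\ell^2$ control; with that in hand the remainder is just the $p$-series convergence check, and nothing beyond the explicit singular system of Proposition~\ref{l1} is needed.
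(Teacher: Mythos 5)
Your proposal is correct and follows essentially the same route as the paper's own proof: expand $f^\dagger$ in the sine basis from Proposition~\ref{l1}, differentiate term by term, and use Cauchy--Schwarz to reduce uniform convergence of the differentiated series to the $p$-series criterion, which yields the thresholds $s>\tfrac34$ and $s>\tfrac54$. The additional Parseval bounds on $\|(f^\dagger)'\|_{L^2}$ and $\|(f^\dagger)''\|_{L^2}$ and the remark about choosing a classical representative are correct refinements but not needed beyond what the paper does.
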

	\begin{proof}[Proof of Proposition \ref{p4}]
		First $f^\dagger\in\mathcal{X}_{s,\rho}$ implies that there exists $h\in L^2$ with $\|h\|\le \rho$, such that $f^\dagger=\sum_{j=1}^\infty \varphi_s(\sigma_j^2) (h,v_j) v_j$. Differentiating the sum formally term-by-term, we obtain
		\begin{equation*}
			\sqrt{2}\sum_{j=1}^\infty \pi j \varphi_s(\sigma_j^2) (h,v_j) \cos\left(\pi j \cdot \right).
		\end{equation*}
				We now show that this series converges uniformly in $x$. Indeed, using Cauchy-Schwartz,
				\begin{align*}
			&\sum_{j=1}^\infty \pi j \varphi_s(\sigma_j^2)|(h,v_j)| |\cos(j\pi x)| \\
            &\qquad \le \pi \sqrt{\sum_{j=1}^\infty (h,v_j)^2} \sqrt{\sum_{j=1}^\infty j^2 \varphi_s^2(\sigma_j^2)}\le \pi^{1+2s} \rho \sqrt{\sum_{j=1}^\infty j^{2-4s}},
		\end{align*}
				and the right hand side converges whenever $s>\frac{3}{4}$, uniformly in $x$. Consequently, it holds that
		\begin{equation*}
			(f^\dagger)' =  \sqrt{2}\sum_{j=1}^\infty j \pi \varphi_s(\sigma_j^2) (h,v_j) \cos(\pi j \cdot).
		\end{equation*}
				Similarly, differentiating $f^\dagger$ twice formally term-by-term, we get
				\begin{equation*}
			- \sqrt{2}\sum_{j=1}^\infty j^2 \pi^2 \varphi_s(\sigma_j^2)(h,v_j) v_j(\cdot),
		\end{equation*}
				and
				\begin{align*}
			\sum_{j=1}^\infty \pi^2j^2 \varphi_s(\sigma_j^2)|(h,v_j)| |v_j(x)|&\le  \pi^2\sqrt{\sum_{j=1}^\infty (h,v_j)^2} \sqrt{\sum_{j=1}^\infty j^4 \varphi_s^2(\sigma_j^2)}\le \pi^{2+2s} \rho \sqrt{\sum_{j=1}^\infty j^{4-4s}},
		\end{align*}
				where the right hand side converges uniformly in $x$ whenever $s>\frac{5}{4}$.
			\end{proof}

\section{Numerical experiments}
\label{sec:num}
This section presents three numerical demonstrations of GCV. We first apply the
method to the integral equation \eqref{s1:e1}, continue with a Gaussian‐blur
deblurring problem, and conclude with a computed-tomography (CT) experiment.

\paragraph{Integral Equation \eqref{appl:int}}
Throughout the simulations we fix $D=2^{14}=16384$ and draw i.i.d. standard
Gaussian random variables $X_j$, $j=1,...,D$. From these realizations we
construct three exact solutions
\begin{equation*}
	f^{i,\dagger}:=\sum_{j=1}^D \sigma_j^{s_i}  X_jv_j
\end{equation*}
where $s_i \in\left\{\frac{1}{4},\frac{3}{4},\frac{5}{4}\right\}$ controls the
smoothness of the solution. We define the corresponding exact data as
\begin{align*}
	g^{i,\dagger}_m:&= \left(K f^{i,\dagger}(\xi_{l,m})\right)_{l=1}^m = \sqrt{2} \left(\sum_{j=1}^D \left(j \pi\right)^{2(s_i+1)} X_j \sin\left(j\pi \xi_{l,m}\right)\right)_{l=1}^m\in\R^m.
\end{align*}
Perturbed data are generated via
\begin{equation}\label{num:meas}
	g^{i,\delta}_m:= g^{i,\dagger}_m  +\delta \begin{pmatrix} Z_1 \\ ... \\ Z_m\end{pmatrix},
\end{equation}
with $Z_1,...,Z_m$ i.i.d. standard Gaussian, sampled anew in every simulation
loop. We first give formulas to calculate the error of our estimator. Using
\eqref{claim:sv}, the projection $(f^{i,\dagger},v_{k,m}) =  \sum_{j=1}^D
\sigma_j^{s_i+1} X_j (v_j,v_{k,m})$
can be calculated exactly for $k=1,...,m$; define
$f^{i,\dagger}_m:=\sum_{j=1}^m(f^{i,\dagger},v_{j,m})v_{j,m}$. Then
\begin{align*}
	\|f_{k,m}^\delta - f^{i,\dagger}_m\|^2 = \sum_{j=1}^k \left(\frac{(g^{i,\delta}_m,u_{j,m})_{\R^m}}{\sigma_{j,m}} - (f^{i,\dagger},v_{j,m})\right)^2 + \sum_{j=k+1}^m (f^{i,\dagger},v_{j,m})^2
\end{align*}
and
\begin{align*}
	f^{i,\dagger}_m-f^{i,\dagger}&= \sum_{j=1}^m(f^{i,\dagger},v_{j,m}) v_{j,m} - \sum_{l=1}^D (f^{i,\dagger},v_l) v_l\\
	&=\sum_{l=1}^D \left(\sum_{j=1}^m (f^{i,\dagger},v_{j,m})(v_{j,m},v_l) - (f^{i,\dagger},v_l)\right)v_l \\
    & \quad +\sum_{l=D+1}^\infty \sum_{j=1}^m(f^{i,\dagger},v_{j,m})(v_{j,m},v_l) v_l.
\end{align*}
By orthogonality ($\|f^{i,\delta}_k-f^{i,\dagger}\|^2 = \|f^{i,\delta}_k -
f^{i,\dagger}_m\|^2 +\|f^{i,\dagger}_m- f^{i,\dagger}\|^2$),
\begin{align*}\notag
	&\|f^\delta_{k,m} - f^{i,\dagger}\|^2\\\notag
	=~& \sum_{j=1}^k\left(\frac{(g^{i,\delta}_m,u_{j,m})_{\R^m}}{\sigma_{j,m}} - (f^{i,\dagger},v_{j,m})\right)^2 + \sum_{j=k+1}^m(f^{i,\dagger},v_{j,m})^2\\\label{num:err1}
	&\quad + \sum_{j=1}^D\left(\sum_{j=1}^m(f^{i,\dagger},v_{j,m})(v_{j,m},v_l) - (f^{i,\dagger},v_l)\right)^2 + \sum_{l=D+1}^\infty\left(\sum_{j=1}^m(f^{i,\dagger},v_{j,m})(v_{j,m},v_l)\right)^2
\end{align*}
and we define, suppressing the dependence on $\delta$ and $m,i$, the approximative error of the estimator:
\begin{align}
	e_k:&= \left(\sum_{j=1}^k\left(\frac{(g^{i,\delta}_m,u_{j,m})_{\R^m}}{\sigma_{j,m}} - (f^{i,\dagger},v_{j,m})\right)^2 + \sum_{j=k+1}^m(f^{i,\dagger},v_{j,m})^2\right.\\
	&\qquad\left. +
	\sum_{j=1}^D\left(\sum_{j=1}^m(f^{i,\dagger},v_{j,m})(v_{j,m},v_l) -
	(f^{i,\dagger},v_l)\right)^2\right)^\frac{1}{2}.\label{num:err}
\end{align}
In the simulations we calculate the computable GCV estimator
\begin{equation}\label{num:gcv}
	k_{\rm {gcv}}:=\arg\min_{0\le k\le \frac{m}{2}} \frac{\sum_{j=k+1}^m(g^{i,\delta}_m,u_{j,m})_{\R^m}}{(1-\frac{k}{m})^2},
\end{equation}
and the in practice unfeasible optimal estimator
\begin{equation}\label{num:opt}
	k_{\rm opt}:=\arg\min_{0\le k \le m} e_k,
\end{equation}
for reference. The error we make in approximating $\|f^\delta_{k,m}-f^\dagger\|$ by \eqref{num:err} can be bounded from above as follows (where expectation is with respect to the $X_j's$):
\begin{align*}
	&\E\left[\left| e_k^2 - \|f^{i,\delta}_k-f^{i,\dagger}\|^2\right|\right]\\
	 =~&\sum_{l=D+1}^\infty\E\left[\left(\sum_{j=1}^m\sigma_j^{s_i} X_j(v_{j,m},v_l)\right)^2\right] = \sum_{l=D+1}^\infty \sum_{j=1}^m \sigma_j^{2s_i} (v_{j,m},v_l)^2\\
	\le~& \sum_{l=D+1}^\infty \max_{j=1,...,m} \sigma_j^{2s_i}(m+1) \frac{\sigma_l^2}{\sigma_{j,m}^2}\le 3\max_{j=1,...,m} \sigma_j^{2s_i - 2} \sum_{l=D+1}^\infty \sigma_l^2 \le \frac{3}{\pi^4} \frac{1}{D^3} \max_{j=1,...,m} \sigma_j^{2s_i-2}
\end{align*}
and so
\begin{align*}
	\delta_i^2:= \frac{3}{\pi^4}\begin{cases} \frac{(m\pi)^3}{D^3} &,\quad \mbox{for  } s_i=\frac{1}{4}\\
		\frac{m\pi}{D^3}    &,\quad \mbox{for   } s_i=\frac{3}{4}\\
		\frac{1}{\pi D^3} &,\quad \mbox{for    } s_i=\frac{5}{4}
	\end{cases}.
\end{align*}
is an upper bound for 	$\E\left[\left| e_k^2 - \|f^{i,\delta}_k-f^{i,\dagger}\|^2\right|\right]$. For our choices of $m$ and $D$ we thus obtain
\begin{equation*}
	\delta_i\asymp \begin{cases} 2^{-9}&,\quad \mbox{for   } s_i=\frac{1}{4}\\
		2^{-17}&,\quad \mbox{for   } s_i=\frac{3}{4}\\
		2^{-21}&,\quad \mbox{for   } s_i=\frac{5}{4}
	\end{cases}.
\end{equation*}
We will see below in the error plots that $\delta_i$ is of smaller order than  $e_k$ in all cases. We consider different noise levels $\delta$, which we determine implicitly via the signal-to-noise ratio (SNR). The SNR is defined as
\begin{equation*}
	{\rm SNR}:=\frac{\|{\rm signal}\|}{\|{\rm noise}\|} = \frac{\|g^{i,\dagger}\|_m}{\sqrt{m}\delta}.
\end{equation*}
For each exact solution $f^{i,\dagger}$ and each ${\rm SNR}$, we generate $200$ independent noisy measurements $g^{\delta}_m$ (in \eqref{num:meas}), and calculate $k_{\cdot}$ along with the corresponding errors $e_{k_{\cdot}}$, where $\cdot\in\{{\rm gcv},{\rm opt}\}$, see $\eqref{num:err}-\eqref{num:opt}$. We fix the number of measurements as $m=2^{9}$ and let ${\rm SNR}$ vary over $\{1,10,...,10^8\}$ (that is we effectively vary the noise level $\delta$).
The results are presented in Figure \ref{num:res}. In the left column we visualize the statistics as box plots and in the right column we give the corresponding sample means and sample standard deviations in tabular form. In each box plot, the upper and lower edge give the 75- respective 25\% quantile of the statistic $e_{k_{\cdot}}$ for $\cdot={\rm gcv}$ (red) and $\cdot={\rm opt}$ (blue). The median of the statistic is given as a red bar inside the boxes. The whiskers extend to the samples whose distance to the upper respectively lower edge is less than six times the height of the box. All samples which fall outside of the whiskers are plotted individually as red crosses (outliers). Outliers above the upper limit $1$ are plotted just above, retaining their relative order, but not given the exact value.

We clearly observe the convergence of the error, as the noise level decreases
(that is as the SNR increases). Hereby, the convergence rate of the generalized
cross-validation is comparable to the one of the optimal rate at least for
small noise levels. For larger noise levels (smaller SNR) the statistic for the
generalized cross-validation is rather spread out. Moreover we observe
saturation of the error for rougher solutions with smoothness parameter
$s_i\in\{1/4,3/4\}$, due to a dominating discretization error. The difference
between $e_{k^\delta_{\rm gcv}}$ and $e_{k^\delta_{\rm opt}}$ in the saturation
regime is due to the constraint $k^\delta_{\rm gcv}\le \frac{m}{2}$. Note that
in all cases the error for the largest SNR is still of higher order than the
errors $\delta_i$ we make in the approximation.

\begin{figure}
	\centering
	\begin{minipage}[c]{0.49\textwidth}
		\includegraphics[width=\textwidth]{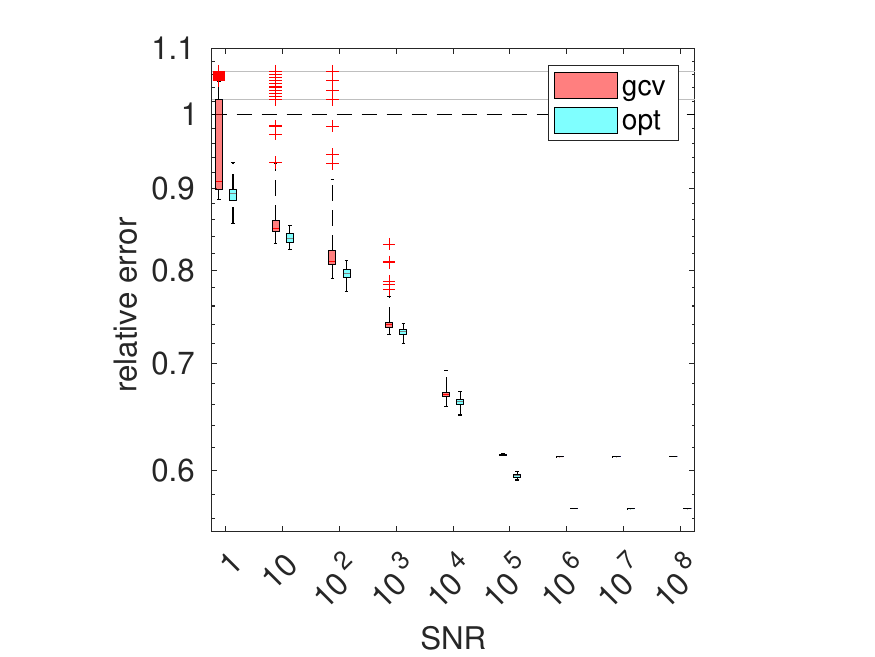}
	\end{minipage}
	\begin{minipage}[c]{0.49\textwidth}
		\centering
		\setlength{\tabcolsep}{4pt}
	\begin{tabular}{c|cc|}
			${\rm SNR}$ & $e_{\rm gcv}$  & $e_{\rm opt}$  \\
			\toprule
			1&	1.2e0 $\pm$ 8.3e-1	& 8.9e-1 $\pm$ 1.1e-2    \\
			10&	9.8e-1 $\pm$ 1.2e0	& 8.4e-1 $\pm$ 6.9e-3    \\
			$10^2$&	8.3e-1 $\pm$ 7.1e-2	& 8.0e-1 $\pm$ 6.9e-3    \\
			$10^3$&	7.4e-1 $\pm$ 1.2e-2	& 7.3e-1 $\pm$ 4.1e-3    \\
			$10^4$&	6.7e-1 $\pm$ 4.6e-3	& 6.6e-1 $\pm$ 3.6e-3    \\
			$10^5$&	6.1e-1 $\pm$ 2.4e-4	& 6.0e-1 $\pm$   1.4e-4  \\
			$10^6$&	6.1e-1 $\pm$ 1.9e-6	& 5.7e-1 $\pm$ 3.0e-5    \\
			$10^7$&	6.1e-1 $\pm$ 2.1e-8	& 5.7e-1 $\pm$ 3.6e-7    \\
			$10^8$&	6.1e-1 $\pm$ 8.3e-10	& 5.7e-1 $\pm$ 1.5e-8    \\
			\bottomrule
		\end{tabular}
	\end{minipage}
	\begin{minipage}[c]{0.49\textwidth}
		\includegraphics[width=\textwidth]{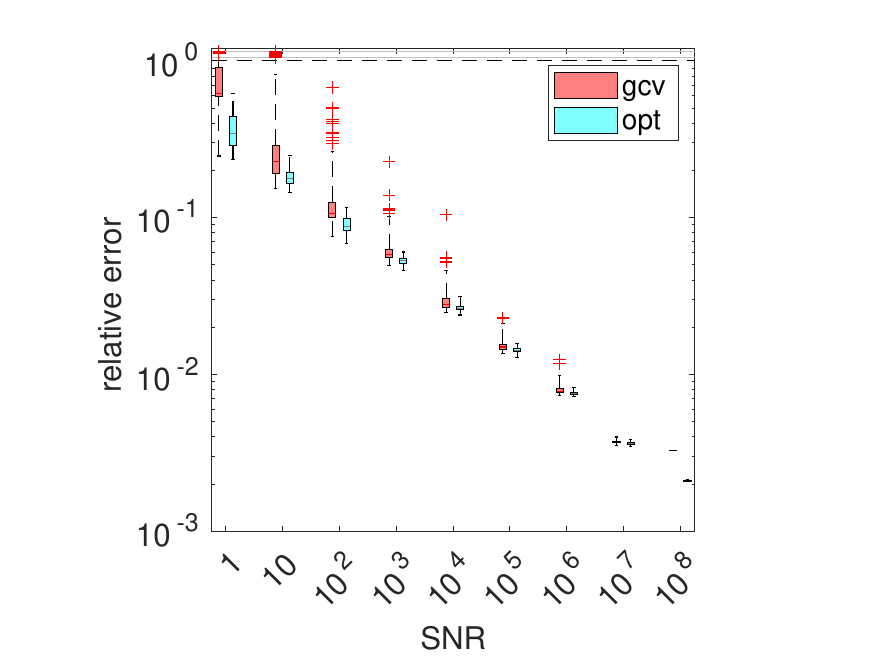}
	\end{minipage}
	\begin{minipage}[c]{0.49\textwidth}
		\centering
		\setlength{\tabcolsep}{4pt}
	\begin{tabular}{c|cc|}
			${\rm SNR}$ & $e_{\rm gcv}$  & $e_{\rm opt}$  \\
			\toprule
			1&	2.5e0 $\pm$ 1.0e1	& 3.8e-1 $\pm$ 1.1e-1    \\
			10&	3.7e-1 $\pm$ 5.9e-1	& 1.8e-1 $\pm$ 2.3e-2    \\
			$10^2$&	1.3e-1 $\pm$ 7.7e-2	& 9.0e-2 $\pm$ 1.0e-2    \\
			$10^3$&	6.4e-2 $\pm$ 1.8e-2	& 5.3e-2 $\pm$ 2.8e-3    \\
			$10^4$&	3.0e-2 $\pm$ 7.3e-3	& 2.7e-2 $\pm$ 1.2e-3    \\
			$10^5$&	1.5e-2 $\pm$ 1.3-3	& 1.4e-2 $\pm$ 5.1e-4    \\
			$10^6$&	7.9e-3 $\pm$ 5.5e-4	& 7.6e-3 $\pm$ 1.7e-4    \\
			$10^7$&	3.7e-3 $\pm$ 7.3e-5	& 3.6e-3 $\pm$ 5.8e-5    \\
			$10^8$&	3.3e-3 $\pm$ 6.0e-7	& 2.1e-3 $\pm$ 1.5e-5    \\
			\bottomrule
	\end{tabular}
	\end{minipage}
	\begin{minipage}[c]{0.49\textwidth}
		\includegraphics[width=\textwidth]{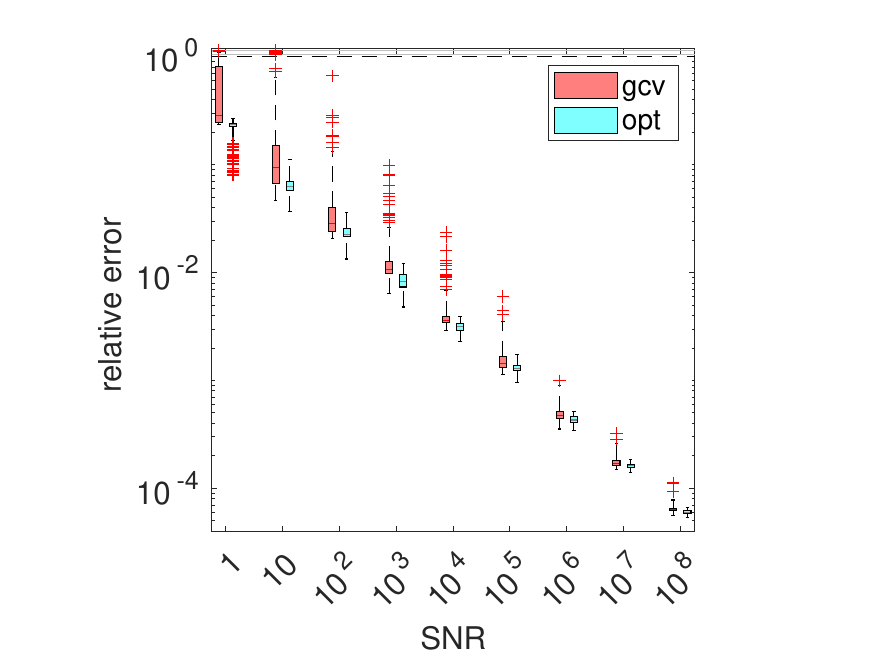}
	\end{minipage}
	\begin{minipage}[c]{0.49\textwidth}
		\centering
		\setlength{\tabcolsep}{4pt}
	\begin{tabular}{c|cc|}
			${\rm SNR}$ & $e_{\rm gcv}$  & $e_{\rm opt}$  \\
			\toprule
			1&	1.5e0 $\pm$ 4.2e0	& 2.2e-1 $\pm$ 4.2e-2    \\
			10&	2.2e-1 $\pm$ 4.8e-1	& 6.6e-2 $\pm$ 1.4e-2    \\
			$10^2$&	4.7e-2 $\pm$ 6.1e-2	& 2.4e-2 $\pm$ 3.4e-3    \\
			$10^3$&	1.5e-2 $\pm$ 1.2e-2	& 8.4e-3 $\pm$ 1.5e-3    \\
			$10^4$&	4.4e-3 $\pm$ 2.6e-3	& 3.1e-3 $\pm$ 3.0e-4    \\
			$10^5$&	1.6e-3 $\pm$ 5.6e-4	& 1.3e-3 $\pm$ 1.2e-4    \\
			$10^6$& 4.9e-4	$\pm$ 8.0e-5 & 4.3e-4 $\pm$ 3.5e-5    \\
			$10^7$&	1.8e-4 $\pm$ 2.1e-5	& 1.6e-4 $\pm$ 7.9e-6    \\
			$10^8$&	6.4e-5 $\pm$ 5.3e-6	& 6.0e-5 $\pm$ 2.6e-6    \\
			\bottomrule
				\end{tabular}
	\end{minipage}
	\caption{Left column: Boxplots of the errors  for 200 independent runs, with different signal-to-noise ratios (SNR). Right column: The corresponding sample mean and sample standard deviation of the errors. First row: rough solution. Second row: differentiable solution. Third row: twice differentiable solution.}
	\label{num:res}
\end{figure}

\paragraph{Image Deblurring}
The second experiment investigates a practical image deblurring problem. The task
is to recover a sharp image $f(x,y)$ from its blurred observation $g(x,y)$. The
process is expressed as solving an integral equation
\begin{equation*}
    \label{eq:intblur}
    g(x,y) = \iint_{\mathbb{R}^2} f(u,v) k(x-u,y-v)du\ dv,
\end{equation*}
where $k$ is a blur kernel, a Gaussian point-spread function is employed:
%
\begin{equation}
    \label{eq:gaussblur}
    k(x,y) = G_{\sigma}(x,y) = \frac{1}{2\pi \sigma^2} \exp(-\frac{x^2 +
    y^2}{2\sigma^2}),
\end{equation}
where $\sigma > 0$ is the standard deviation of the Gaussian and controls the
amount of smoothing. The convolution matrix is derived by discrete
approximation of \eqref{eq:gaussblur}: we sample the Gaussian function at
discrete points $(x,y)$ on a \textit{smaller} integer grid, taking into account
a desired kernel size $K \times K$. Thus, we derive each element $(m,n)$ in the
discrete \textit{smaller} kernel, also called convolution matrix:
\begin{equation*}
    w_{m,n} = G_{\sigma}(m,n) = \frac{1}{2\pi \sigma^2} \exp(-\frac{m^2 +
    n^2}{2\sigma^2}),
\end{equation*}
where $m,n \in \{-M,...,M\},\ M = \frac{K - 1}{2}$. Matrix entries $w_{m,n}
\leftarrow \frac{w_{m,n}}{w}$ are normalized by the factor:
    $$w = \sum_{m=-M}^{M} \sum_{n=-M}^{M} w_{m,n}.$$
The application of the blur to an image is represented as a convolution between
the image and a \textit{smaller} kernel. Expressing the convolution in a linear
formulation for each pixel $(i,j)$ of the image $f$ that we consider as an $H
\times W$ matrix of pixel values $f[i,j]$, we get:

\begin{equation}
  \label{eq:convolution}
  g[i,j] = (f*G_{\sigma})[i,j] = \sum_{m=-M}^{M}\sum_{n=-M}^{M}\,f[i-m,\,j-n]\,
  w_{m,n}.
\end{equation}

For notational convenience we vectorise each image into a column vector $u$ by
stacking its rows. For instance, $f$ derives its vector-image $u$ by
\begin{equation}
    \label{eq:vectorize}
    u[\alpha] = f[i,j], \qquad \alpha = iW + j \; \Leftrightarrow \;
    \begin {cases}
    & i \;=\; \left\lfloor \frac{\alpha}{W} \right\rfloor, \\
    & j \;=\; \alpha \bmod W.
    \end{cases}
\end{equation}
Since convolution is a linear operation in $f$, there exists a matrix
$A \in \mathbb{R}^{HW \times HW}$ that acts on its vectorized image $u$ and turning it into a blurred vector-image $b = Au$.
Note that on one hand, each row $A[\alpha,\: \cdot\ ]$ corresponds to one
output pixel $b[\alpha] = A[\alpha,\: \cdot\ ]u$, and, on the other hand,
$b[\alpha] = g[i,j] = (f * G_{\sigma})[i, j]$, where $\alpha = iW + j$ by
definition. Furthermore, for each term $f[i - m, j - n]$ in
\ref{eq:convolution}, its location in $u$ is $\beta = (i - m)W + (j - n)$ by
\ref{eq:vectorize}. Thus,
\begin{equation}
  \label{eq:blurred}
  b[\alpha] = \sum_{m=-M}^{M}\sum_{n=-M}^{M} u[(i - m) W + (j -  n)]\ w_{m,n},
  \quad \alpha = iW + j.
\end{equation}
We build the matrix $A$ row by row and thus for each row $A[\alpha,\: \cdot\ ]$:
\begin{enumerate}
  \item compute $i$ and $j$ from $\alpha$ by \ref{eq:vectorize},
  \item for each kernel offset $(m,n) \in \{-M,...,M\}^2$ calculate $\beta = (i
  - m)W + (j - n)$,
  \item if $(i-m, j-n)$ is within the valid range $[0,H-1] \times [0,W-1]$, set
  $A[\alpha, \beta] = w_{m,n}$, in accordance with \ref{eq:blurred},
  \item otherwise, set $A[\alpha, \beta] = 0$ (or the other value corresponding
  to a chosen boundary condition).
\end{enumerate}
Thereby, $A[\alpha, \beta]$ is the coefficient that multiplies $u[\beta]$ to
produce $b[\alpha]$ after summation:
\begin{equation}
  A[\alpha, \beta]
  \;=\;
  \begin{cases}
    w_{m,n}, & \text{if } \beta = (i - m)\,W + (j - n), \;\text{and }
    i = \left\lfloor \frac{\alpha}{W} \right\rfloor,\
    j = \alpha \bmod W,\\[6pt]
    0 , & \text{(or a boundary condition value) otherwise}.
  \end{cases}
\end{equation}

Thus, the operator $A$ takes each pixel of $x$, multiplies their values
by weight given by a Gaussian kernel and, sum the results deriving blurred
vector-image $b$.

To generate a task for our example, we use the frameworks IR Tools
\cite{gazzola2019irtools} and AIR Tools II \cite{hansen2018airtoolsii}. This
software is designed for algebraic iterative reconstruction methods, tailored
to solve regularized inverse problems. We use its capabilities in building a
blurring operator $A$ and construct an inverse problem of deducting the
original vector-image $x$ from  the blurred one $b$. We use default IR Tools
settings to generate data for use in image deblurring problems. Namely, we choose
an image with $H \times W = N \times N = 256 \times 256$ resolution,
\textit{medium} blur severity, corresponding to $\sigma = 4$, kernel size
$K = 256$ spanning the entire image,
and \textit{reflective} (or Neumann, or reflexive) boundary conditions. Note that
in the default IR Tools settings an enlarged (padded) test image is first
blurred using \textit{zero} boundary conditions. After blurring, a central
subimage of size $N$ is extracted from both the original and the blurred
images. This method ensures that no inverse crime is committed and $A
\times x \approx b$, since the situation when $A \times x = b$ exactly is
unrealistic. This is how IR Tools simulates discretization error.
We repeat the experiment $m = 1000$ times and vary SNR over $\{10^{-3}, 10^{-2},
..., 10^2, 10^3\}$ for noise levels added to the blurred image-vector
$b$.

Figure \ref{fig:blurplot} demonstrates the results for our experiments in
blue and red boxes for the GCV method and optimal solution. Edges, whiskers and
outliers settings are similar to the ones in the Figure \ref{num:res}, the
median values are blue or red bars inside the boxes. The figure demonstrates
convergence of the error depending on noise level decrease. Again, the
convergence rate for GCV is comparable to optimal. Note that for results not
shown with even smaller
noise (SNR $> 10^3$) the domination of the discretization error became clear and
lead to saturation of the error.
In the meantime, the Figure \ref{fig:blurplot} demonstrates that alike the
relative error, the truncation index $k_{\rm gcv}$ is close to the optimal one $k_{\rm opt}$.

Figure \ref{fig:blurpic} showcases the outcome of the generalized
cross-validation method applied to a picture of the Hubble space telescope
blurred with the Gaussian blur.

\begin{figure}
  \centering
  \begin{minipage}[c]{0.445\textwidth}
    \includegraphics[width=\textwidth]{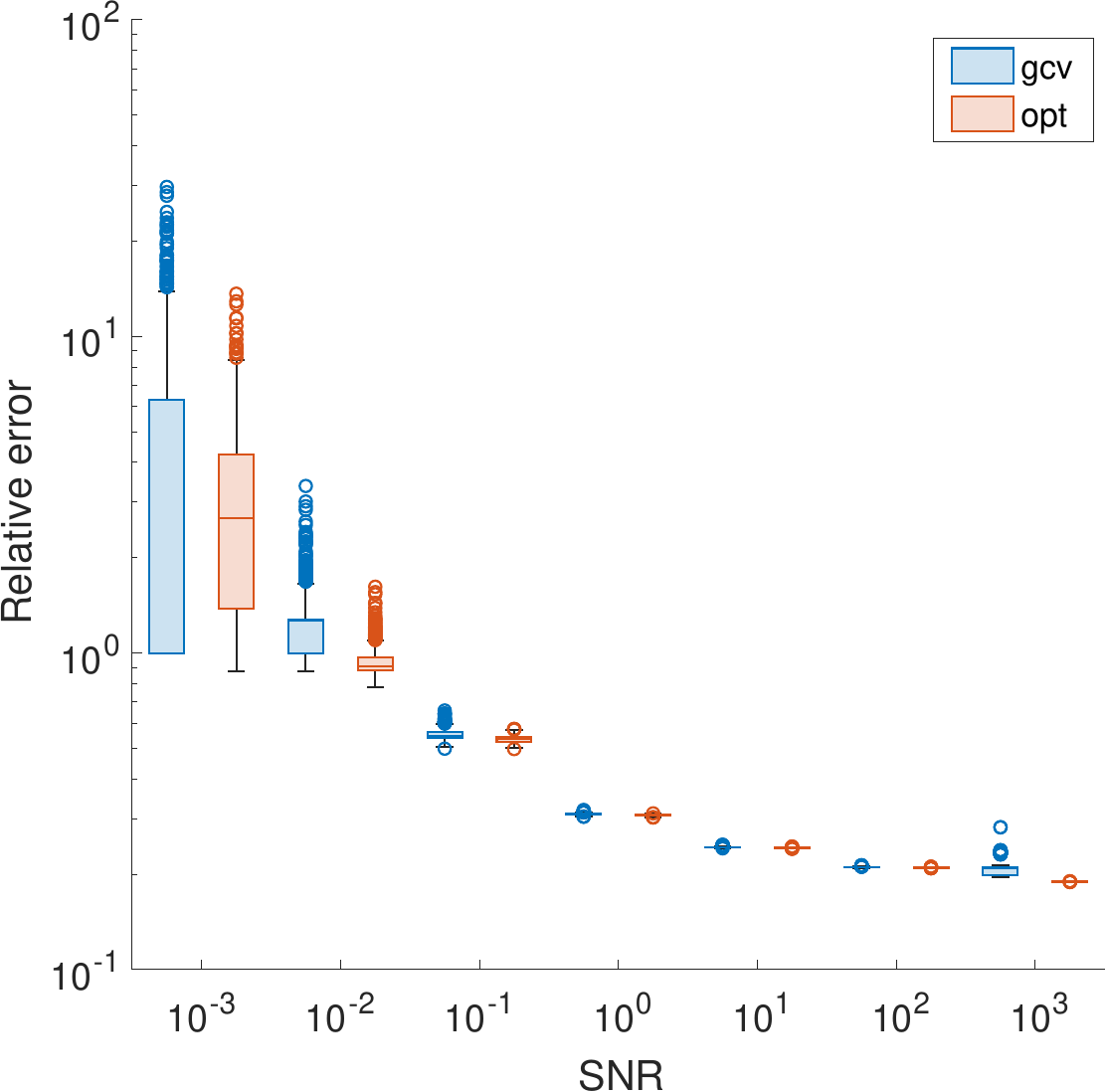}
  \end{minipage}
  \begin{minipage}[c]{0.545\textwidth}
    \includegraphics[width=\textwidth]{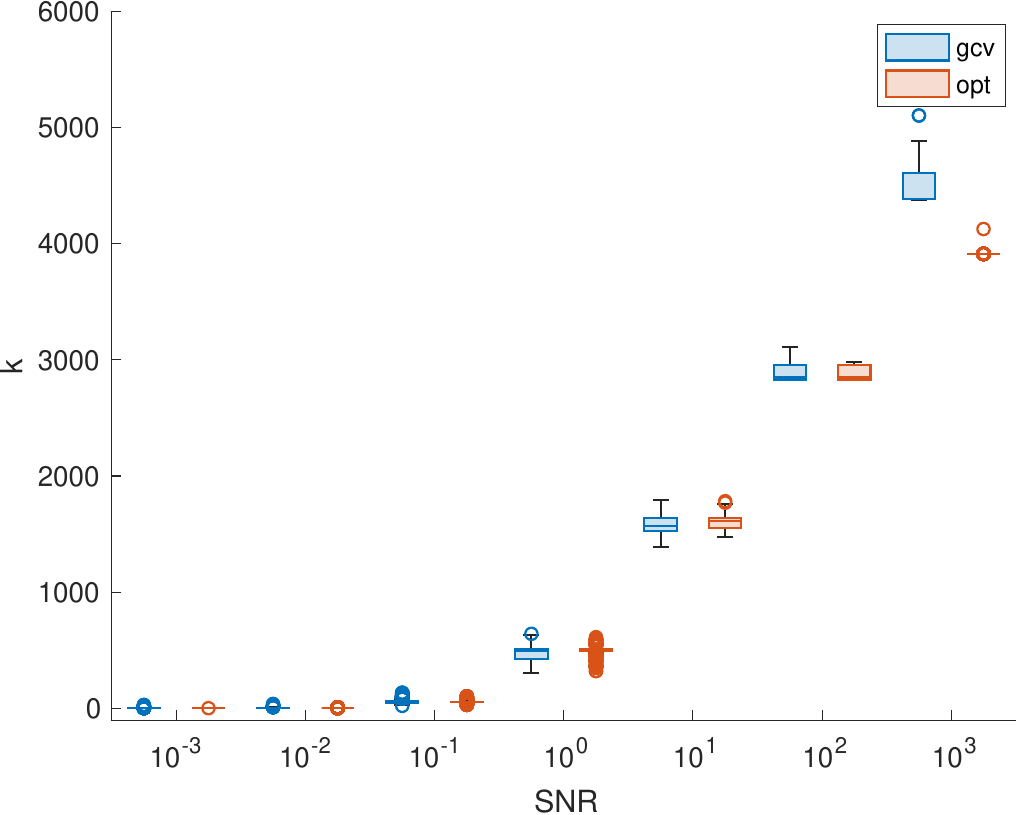}
  \end{minipage}

  \caption{Box-plot summary for 1000 independent runs, with different
  signal-to-noise ratios (SNR) with default IR Tools settings. On the left:
  relative errors, on the right: truncation index $k$ for GCV (blue) and optimal
  (red) solutions.}
  \label{fig:blurplot}
\end{figure}

\begin{figure}
  \centering
  \begin{minipage}[c]{0.49\textwidth}
    \includegraphics[width=\textwidth]{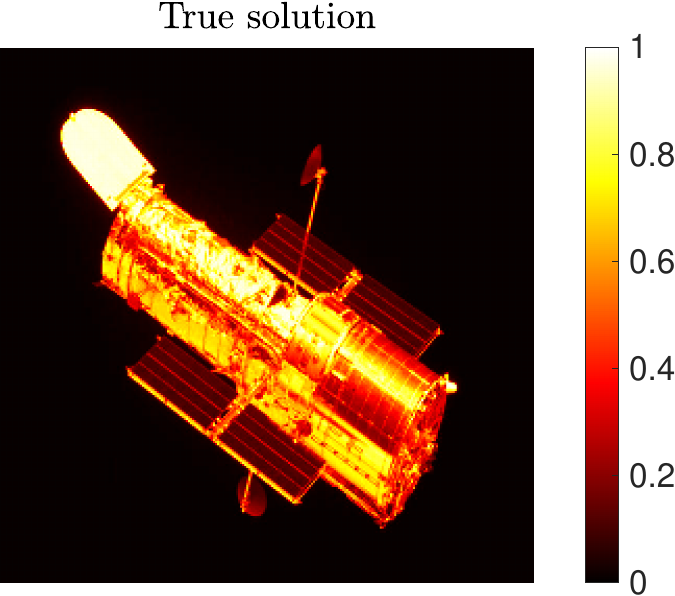}
  \end{minipage}
  \begin{minipage}[c]{0.49\textwidth}
    \includegraphics[width=\textwidth]{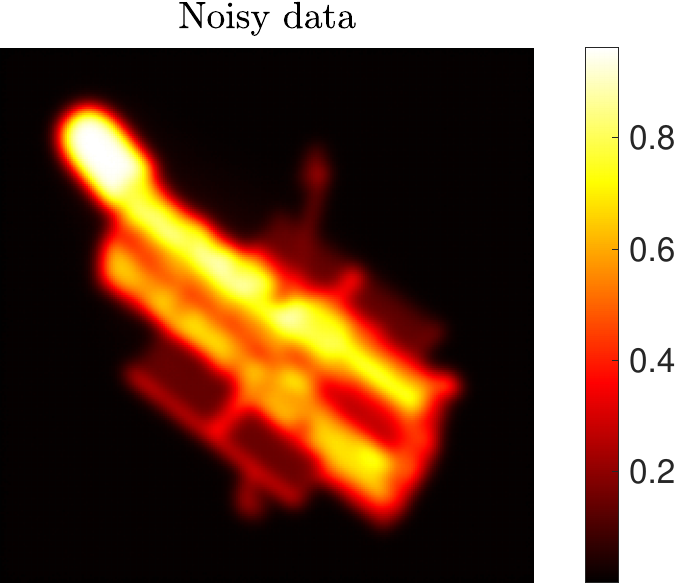}
  \end{minipage}
  \begin{minipage}[c]{0.49\textwidth}
    \includegraphics[width=\textwidth]{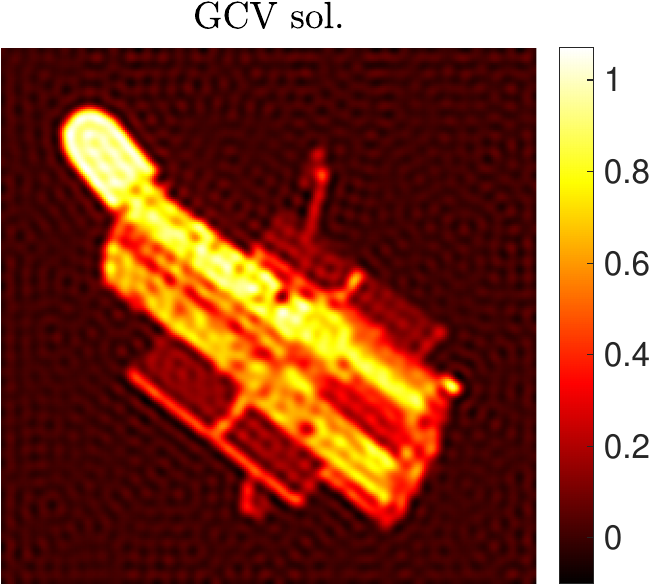}
  \end{minipage}
  \begin{minipage}[c]{0.49\textwidth}
    \includegraphics[width=\textwidth]{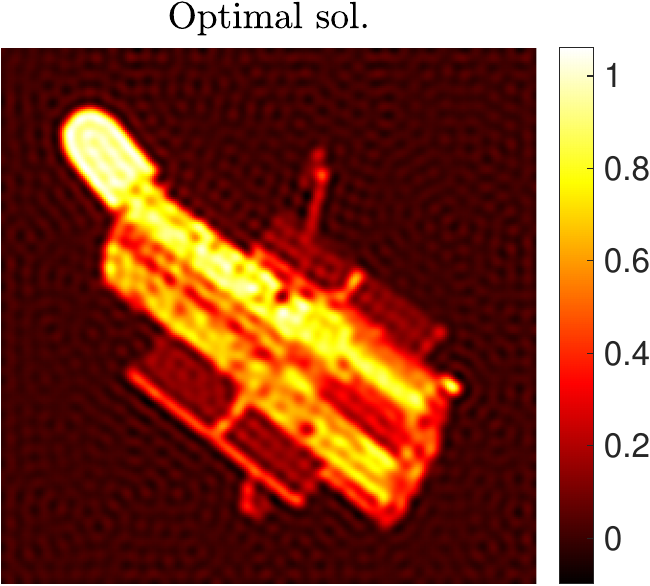}
  \end{minipage}

  \caption{Image deblurring test data for the example in Section \ref{sec:num}
  with SNR $= 10^2$. Top left: true image. Top right: noisy blurred data.
  Bottom left: restored image with generalized cross-validation method with the
  related error equals 0.19671 on $k = 3702$. Bottom left: optimal solution
  with the related error equals 0.195295 on $k = 3682$.}
  \label{fig:blurpic}
\end{figure}

The sparse matrix $A$ is not stored explicitly, instead, the PSF object provided
by IR Tools is used for on-the-fly computations. The implementation exploits the
built-in function
of the package and periodic properties of the task to calculate singular values
from PSF directly avoiding building the whole matrix. Since a default setting
for boundary conditions is \textit{reflective}, we utilize a discrete cosine
transform matrix to derive unique singular vectors saving computation time.

\paragraph{Computed Tomography}
The third study examines parallel-beam computed tomography (CT).
X-rays are photon beams that travel along (by assumption) straight lines through
tissue. Consider a monochromatic beam with intensity $I = I(t)$ moving from
source $t = 0$ to detector $t = L$ along straight path $\Gamma$ with arc length
parametrization $\textbf{x} = \textbf{x}(t)$. Intensity $I$ decreases along a
short path of length $\textup{d}t$ on $\Gamma$ at $\textbf{x}(t)$, and
proportional to $\textup{d}t$, as photons are absorbed according to Beer’s
law, that also states its loss of intensity $\textup{d}I$
\begin{equation}
  \label{eq:beerslaw}
  \textup{d}I = -f I \textup{d}t,
\end{equation}
where $f = f(\textbf{x})$ is the nonnegative attenuation coefficient -- a measure
closely related to the mass density of the material. In X-ray tomography, one
reconstructs this $f(\textbf{x})$ to visualize how the material absorbs photons.
Upon integrating from the source to the detector, the equation
$\ref{eq:beerslaw}$ implies
\begin{equation*}
  I(L) = I(0) \exp(-\int_{\Gamma} f(\textbf{x}) \textup{d}t).
\end{equation*}
The exponent’s integral, $$-\int_{\Gamma} f(\textbf{x}) \textup{d}t$$ is
precisely the line integral of $f$ along the ray $\Gamma$. In parallel-beam
CT one measures these line integrals of $f$. Moreover, since both the source and
detector rotate around the object and emit/receive a bundle of parallel rays, one
measures a set of the integrals for different positions. Mathematically, for a
two dimensional cross-section, each measurement is
\begin{equation}
  (Rf)(\theta, s)  = \int_{\ell(\theta,s)} f(x,y) ds_{\ell} =
  \int_{-\infty}^{\infty} \int_{-\infty}^{\infty} f(x,y) \delta
  \bigl(x\cos\theta + y\sin\theta - s\bigr) dx dy,
\end{equation}
for various rotational angles $\theta$ and distances from the origin $s$
characterizing the shift from source. Thereby, the attenuation coefficient
$f(x,y)$ is integrated along the line $\ell(\theta,s)$ with its small element
$ds_{\ell}$.
In the integral on the right, $\delta(\cdot)$ stands for the Dirac delta
function. The resulting function $(Rf)(\theta, s)$ is called Radon transform of
$f(x,y)$.

We act similarly to Gaussian blur example and construct a matrix operator $A$,
that turns a test  image-vector $x$, derived from a two dimensional density image
$g$, into a signal-vector, called sinogram, of measurements $b$. Assume we have:
\begin{itemize}
  \item $N_{\theta}$ projection angles, indexed by $i_{\theta} =
  \{1,2,...,N_{\theta}\}$,
  \item for each angle, $N_s$ detector positions, indexed by $i_{s} =
  \{1,2,...,N_s\}$.
\end{itemize}
Therefore, the total number of measurements is $M = N_{\theta} \times N_s$ and
thus $b \in  \mathbb{R}^M$.

Similarly to the Gaussian blur example, each pixel $\beta$ of a vector-image
$x$ corresponds to some pixel $(i,j)$ of two dimensional image $g$. Since in
our model task we already have an original image $g$ for reference, we consider
the numerical values of color intensity $g[i,j]$ for every pixel $(i,j)$ as a
value of cross-section function $f$ within the area covered by the pixel. Each
row $\alpha$ of $A$ corresponds to one measured ray, each column
$\beta$ -- to one pixel. Therefore, $A[\alpha, \beta]$ is the weight that pixel
$\beta$ contributes to measurement $\alpha$. There are various choices for a
weighting kernel, e.g.,\ area-based or distance-based. We rely on the IR Tools
approach with length-base weights, i.e.,\ the weight of pixel $\beta$ equals to
the length of intersection between ray $\alpha$ and pixel $\beta$. Summarizing
it up with the property of $x[\beta]$ being an attenuation coefficient, we
derive for the $\alpha$-th measurement:
  $$b[\alpha] = \sum_{\beta = 1}^{HW} A[\alpha, \beta]\; x[\beta],$$
where $H\times W$ is an image $g$ resolution.

We build this example with the IR Tools default settings, such as image
resolution $H \times W = N \times N = 256 \times 256$, projection angles
$N_{\theta} = 180$ with $\theta = \{0,1,...,179\}$, number of rays $N_s =
\lfloor N\sqrt{2} \rceil$ for each source angle, where $\lfloor \cdot \rceil$
means rounding to the nearest integer value and the distance $d = N_s - 1$ from
the first ray to the last. Similarly to the previous example, IR Tools provides
data with no inverse crime committed and $A \times x \approx b$. Again, we take
$m = 1000$ number of measurements with SNR for signal-vector $b$ over $\{10^{-3},
10^{-2}, ..., 10^2, 10^3\}$.

Again, according to Figure \ref{fig:ctplot}, the relative error and the truncation index of
 generalized cross-validation is similar to the optimal choice.
Figure \ref{fig:ctpic} demonstrates the result of restoring the signal back to
the original reference model with GCV method comparing to the optimal solution.

\begin{figure}
  \centering
  \begin{minipage}[c]{0.515\textwidth}
    \includegraphics[width=\textwidth]{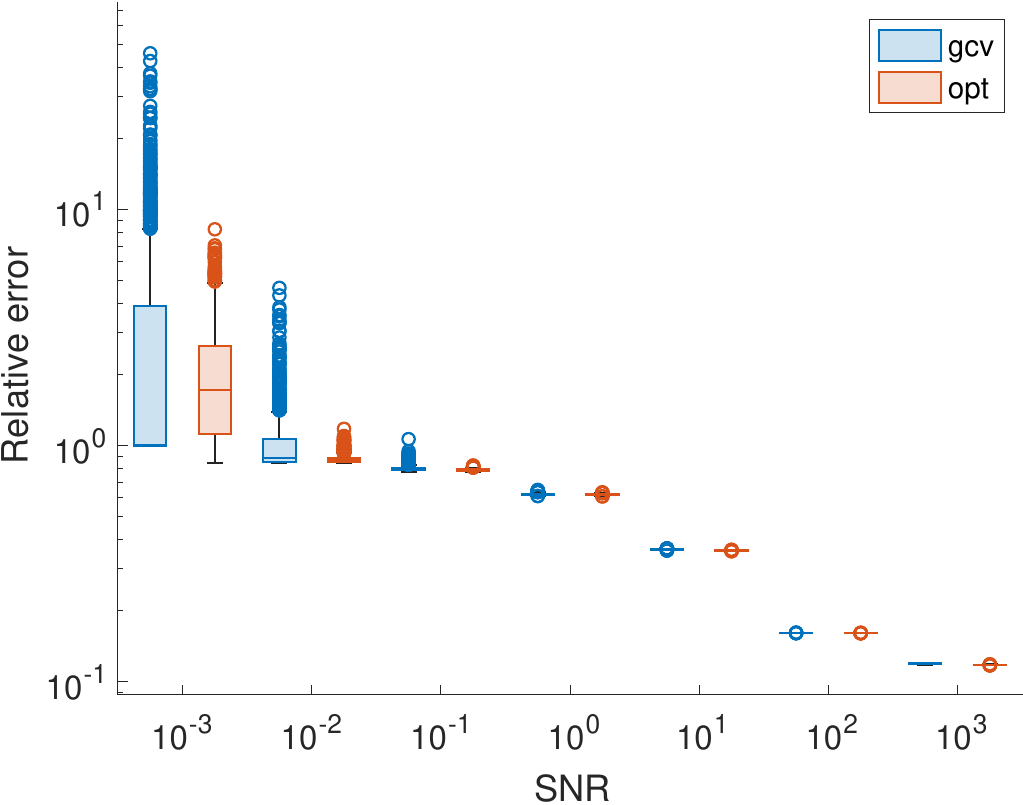}
  \end{minipage}
  \begin{minipage}[c]{0.465\textwidth}
    \includegraphics[width=\textwidth]{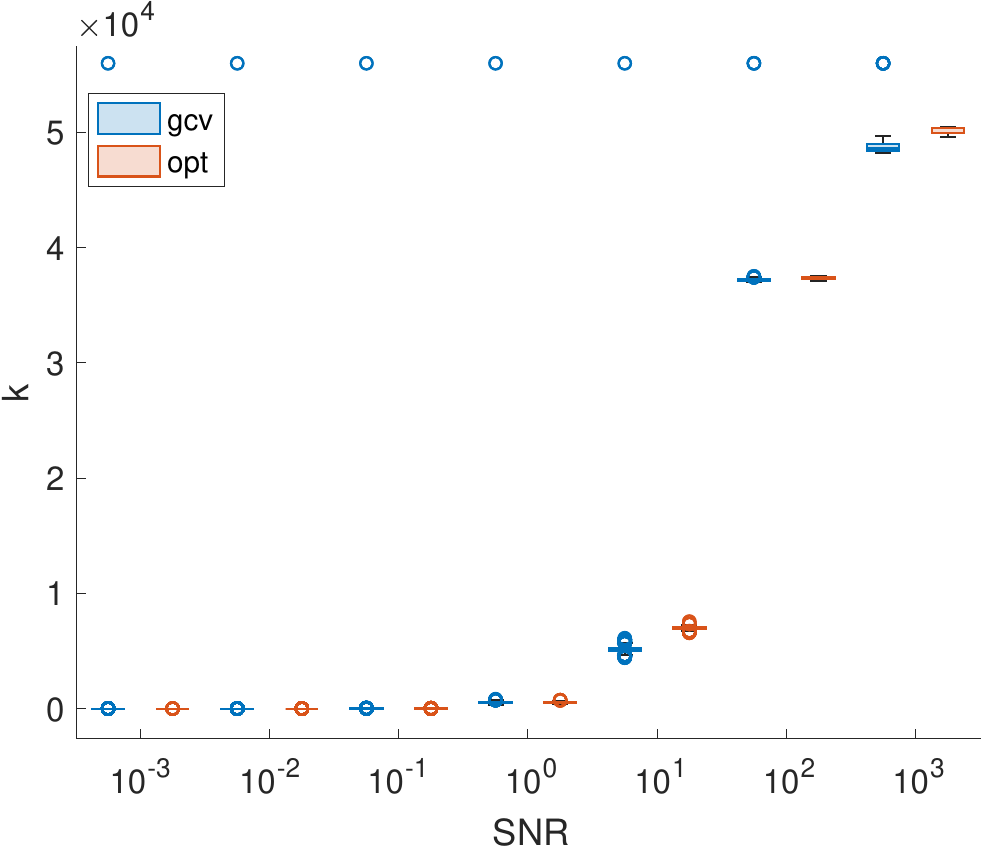}
  \end{minipage}

  \caption{Box-plot summary for 1000 independent runs, with different
  signal-to-noise ratios (SNR) with default IR Tools settings. On the left:
  relative errors, on the right: truncation index $k$ for GCV (blue) and optimal
  (red) solutions.}
  \label{fig:ctplot}
\end{figure}

\begin{figure}
  \centering
  \begin{minipage}[c]{0.32\textwidth}
    \includegraphics[width=\textwidth]{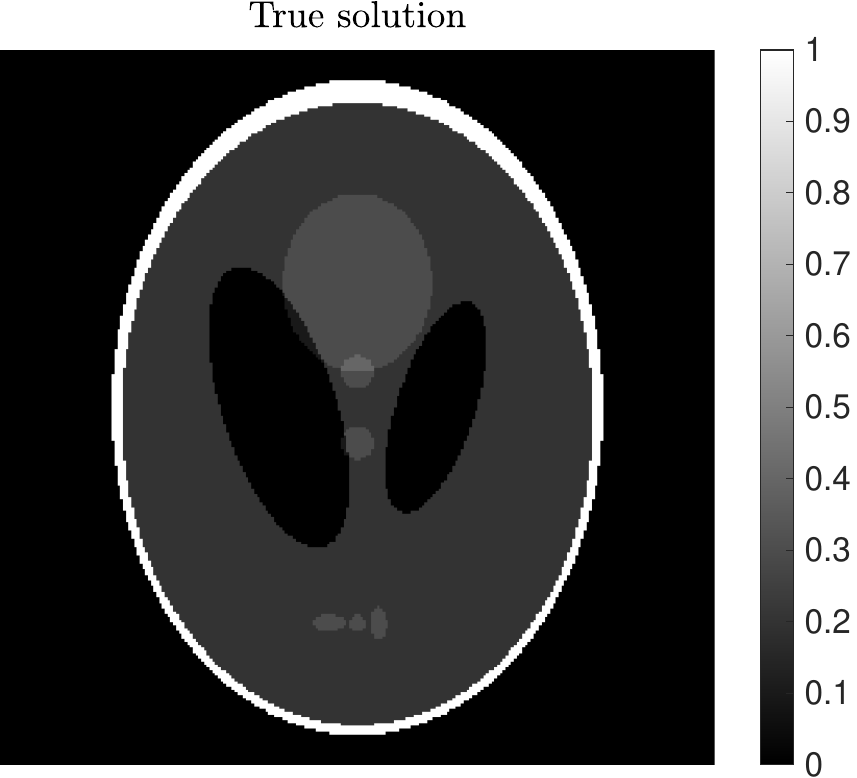}
  \end{minipage}
  \begin{minipage}[c]{0.32\textwidth}
    \includegraphics[width=\textwidth]{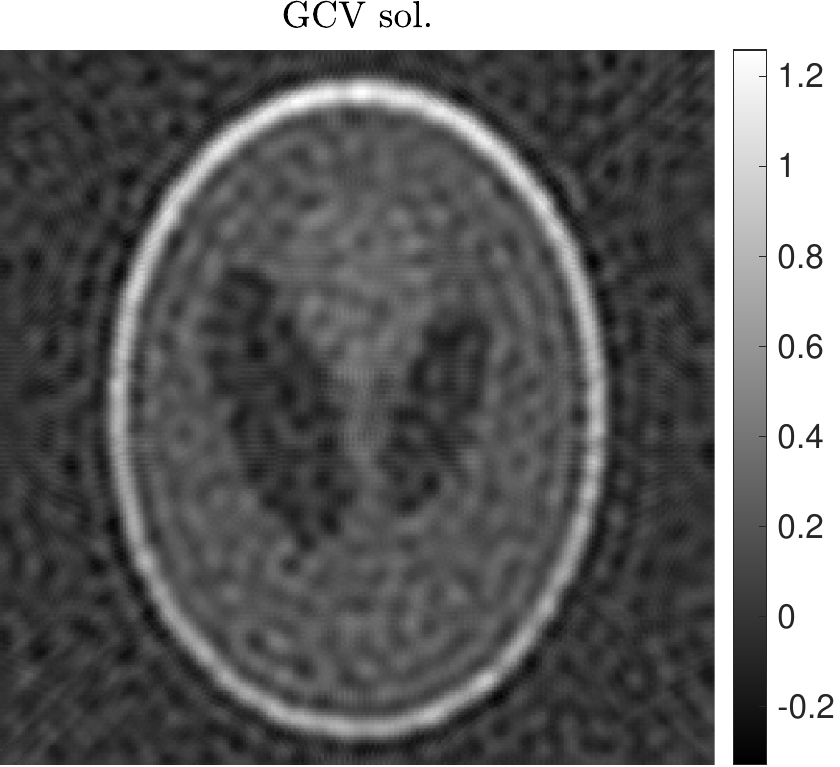}
  \end{minipage}
  \begin{minipage}[c]{0.32\textwidth}
    \includegraphics[width=\textwidth]{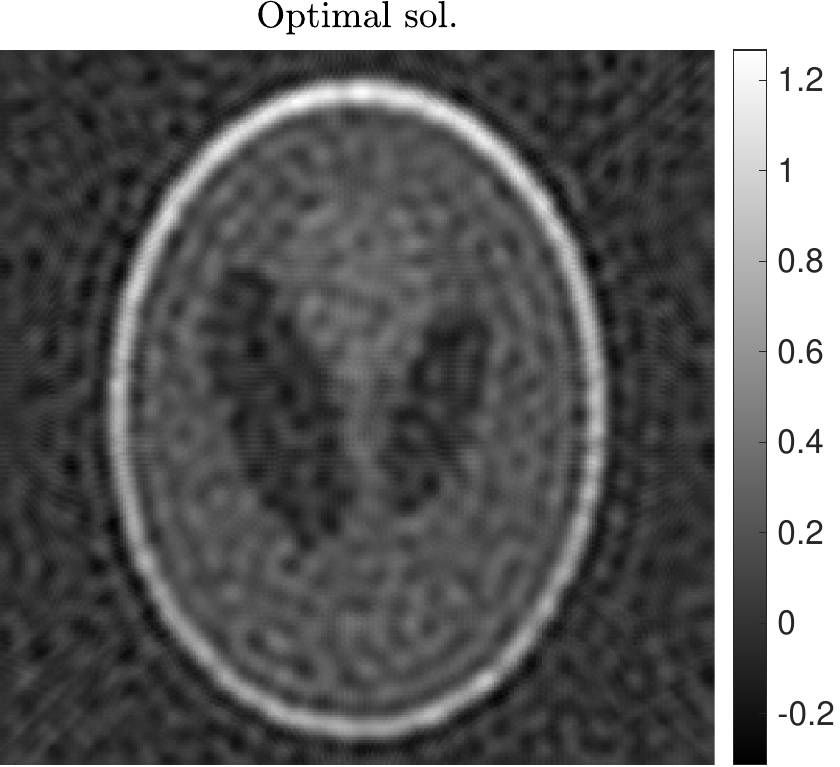}
  \end{minipage}

  \caption{Restoring a computed tomography signal for the example in Section
  \ref{sec:num}. Left: Shepp–Logan phantom image being a reference model, from
  what a signal is created. Middle: reconstruction obtained with GCV. Right:
  optimal solution.}
  \label{fig:ctpic}
\end{figure}

\section{Concluding remarks}
In this article we deduced rigorously a non-asymptotic error bound (in
probability) for GCV as a parameter choice rule for the solution of a specific
ill-posed integral equation. In particular we verified the optimality of the
rule in the mini-max sense, remarkably without imposing a self-similarity
condition onto the unknown solution, which up to our knowledge so far was
required for any rigorous and consistent optimality result for heuristic
parameter choice rules in the context of ill-posed problems. The
numerical evidence shows that the analytical guarantees are not of asymptotic
interest
but translate into robust, near-optimal reconstructions in image and signal
reconstruction pipelines.

\paragraph{Future work.} Several research directions remain open.
First, the findings from Theorem \ref{t0} could be extended to integral
equations with a general kernel $\kappa$. While Theorem \ref{l2} still is applicable, it remains to analyze the discretization error given by the
relation between the decomposition of the continuous operator $K$ and the
semi-discrete one $K_m$. In particular, the design matrix $T_m$ cannot be
calculated exactly in this case and has to be approximated by, e.g., a
quadrature rule, and the estimator should be based on the decomposition of the
quadrature approximation. Second, instead of spectral cut-off other
regularization methods, like Tikhonov regularization or some iterative scheme
should be considered. This will require non-trivial changes of the
probabilistic analysis of GCV. Finally, it would be interesting to extend the
analysis to other settings, for example non-parametric regression
based on kernelized spectral-filter algorithms.
\clearpage
\appendix
\section*{Appendix}
\addcontentsline{toc}{section}{Appendix}
\section{Proof of Lemma \ref{l1}}\label{sec:lem}
\begin{proof}[Proof of Lemma \ref{l1}]
		Recall that in our setting the kernel is the Green's function  of
		the Laplace equation, i.e., $(Kf)''=-f$. It is then straightforward to check
		that the solutions of the differential equation are eigenfunctions of $K$,
		which yield $\sigma_j$ and $v_j$.  While the discretization of the
		differential equation has been analyzed in detail, see, e.g.,
		\cite{boffi2010finite}, results for the corresponding
		discretization of the integral equation appear to be absent from the
		literature. We first show that
		the singular value decomposition of the semi-discrete $K_m$ is strongly
		related to the eigenvalue decomposition of the symmetric $m\times m$ matrix
		$\left(T_m\right)_{ij}:=\int \kappa(\xi_{i,m},y)\kappa(\xi_{j,m},y){\rm
		d}y=\frac{\xi_i(1-\xi_l)}{6}(-\xi_i^2-\xi_j^2+2\xi_j)$. Indeed, since $K_m^*
		\alpha =\sum_{j=1}^m \alpha_j \kappa(\xi_{j,m},\cdot)$ for $\alpha\in\R^m$,
		we obtain for $f_\alpha:=\sum_{j=1}^m \alpha_j\kappa(\xi_{j,m},\cdot)\in L^2$
		and $\lambda\in\R$ the relation
			\begin{align*}
K_m^*K_m f_\alpha = \lambda f_\alpha\qquad\Longleftrightarrow\qquad T_m \alpha = \lambda \alpha.
	\end{align*}
	 	 and consequently we need to find the eigenvalue decomposition of $T_m$. As auxiliary tools, we need the following  $m\times m$-dimensional symmetric matrices:
        \begin{align}\label{s1:e5}
	\Delta_m:&=\begin{pmatrix} 2 & -1 & ... && \\
			-1 &2 &-1 &...&\\
			\vdots &&&\ddots \\
			&&-1&2&-1\\
		& && -1 &2\end{pmatrix}, \qquad
		R_m:=\begin{pmatrix} 4 & 1 & ... && \\
			1 &4 &1 &...&\\
			\vdots &&&\ddots \\
			&&1&4&1\\
		& && 1 &4
    \end{pmatrix},
    \end{align}
    \begin{align*}
		S_m:=\begin{pmatrix}\kappa(\xi_{s,m},\xi_{t,m})\end{pmatrix}_{st}
	\end{align*}
		Note that $(m+1)^2\Delta_m$ is the discretization of the second derivative
		via centered second order finite differences on the homogeneous grid
		$\xi_{1,m},...,\xi_{m,m}$ and $\left(R_m\right)_{ij} =
		\frac{6}{m+1}(\Lambda_{i}^m,\Lambda_j^m)_{L^2(0,1)}$, with the hat functions
		$\Lambda_i(x):=(x-\xi_{i-1,m})(m+1) \chi_{(\xi_{i-1,m},\xi_{i,m}]}(x) +
		(\xi_{i+1,m}-x)(m+1)\chi_{(\xi_{i,m},\xi_{i+1,m}]}(x)$. We therefore show
		that $T_m$ and the matrices in  \eqref{s1:e5} have mutual eigenvectors
		\begin{equation}\label{s1:e5a}
		z_{k,m}:=\sqrt{\frac{2}{m+1}}\begin{pmatrix} \sin\left(\sqrt{\lambda_k} \xi_{1m}\right) & ... & \sin\left(\sqrt{\lambda_k}\xi_{mm}\right)\end{pmatrix}^T\in\R^m,
	\end{equation}
		with $k=1,...,m$. Using the polar identity $2i\sin(x) = e^{ix}-e^{-ix}$ and the closed-form expression for the partial geometric series with $q=e^\frac{ik\pi}{m+1}$, one sees that $\|z_{k,m}\|_{\R^m}^2=1$. By exploiting the polar identity again one easily verifies that $z_{k,m}$ are the eigenvectors of the circulant matrices $\Delta_m$ and $R_m$, and moreover that $\rho_{k,m}:=4+2\cos\left(\frac{k\pi}{m+1}\right)$ are the corresponding eigenvalues for $R_m$. Moreover, slightly lengthy but straightforward computations yield $\Delta_m S_m-S_m\Delta_m =0= \Delta_m T_m-T_m\Delta_m$, which implies that the $z_{k,m}$ are also the eigenvectors of $S_m$ and $T_m$. Next we show that the eigenvalues $\mu_{k,m}$ of $S_m$ are given by	$\mu_{k,m}=(-1)^{k+1}\cos\left(\frac{\sqrt{\lambda_k}}{2(1+m)}\right)\sin\left(\frac{\sqrt{\lambda_k}}{2(1+m)}\right)^{-1} \sin\left(\frac{\sqrt{\lambda_k}}{1+m}\right)^{-1}$. Using the polar identity for $q=e^\frac{ik\pi}{2(m+1)}$ and
	\begin{align*}
		\sum_{j=1}^m q^j j =\frac{q + q^{1 + m} (-1 - m + m q)}{(1 - q)^2}
	\end{align*}
		 yields
		\begin{align*}
		\sum_{l=1}^m \sin\left(\frac{\sqrt{\lambda_k} l}{m+1}\right) l=\frac{m+1}{2} (-1)^{k+1} \frac{\cos\left(\frac{\sqrt{\lambda_k}}{2(m+1)}\right)}{\sin\left(\frac{\sqrt{\lambda_k}}{2(m+1)}\right)},
	\end{align*}
	and because $\sin\left(k\pi m/(m+1)\right)=\sin\left(k\pi/(m+1)\right)$, the $\mu_{k,m}$ can be computed with the defining relation of the eigenvalues:
	\begin{align}\label{singe1}
		\mu_{k,m} \sin\left(\frac{\sqrt{\lambda_k} m}{m+1}\right)
		&=\sqrt{\frac{2}{m+1}}\mu_{k,m} (z_{k,m})_m =\sqrt{\frac{2}{m+1}} \left(S_m z_{k,m}\right)_m\\ &= \sum_{l=1}^m \xi_{l,m}(1-\xi_{m,m})\sin\left(\frac{\sqrt{\lambda_k} l}{m+1}\right)
		= \frac{(-1)^{k+1}}{2(m+1)}\frac{\cos\left(\frac{\sqrt{\lambda_k}}{2(m+1)}\right)}{\sin\left(\frac{\sqrt{\lambda_k}}{2(m+1)}\right)}.
	\end{align}
		To finally determine the eigenvalues of $\sigma_{k,m}^2$ of $T_m$ we set $w_{k,m}:=\sum_{l=1}^m (z_{k,m})_l \kappa(\xi_{l,m},\cdot)$ and normalize in two ways. First,
		\begin{align*}
	\|w_{k,m}\|^2&= \sum_{l,l'=1}^m (z_{k,m})_l (z_{k,m})_{l'} (\kappa(\xi_{l,m},\cdot),\kappa(\xi_{l',m},\cdot)) = z_{k,m}^T T_m z_{k,m}=\sigma_{k,m}^2.
	\end{align*}
		Second, expanding $\kappa(\xi_{j,m},\cdot)=\sum_{i=1}^m\kappa(\xi_{l,m},\xi_{i,m}) \Lambda_{i}(\cdot)$ in terms of the hat functions,
		\begin{align}\notag
		&\|w_{k,m}\|^2\\
		=&~\| \sum_{l=1}^m (z_{k,m})_l \sum_{i=1}^m \kappa(\xi_{l,m},\xi_{i,m}) \Lambda_i\|^2 = \|\sum_{i=1}^m \left( S_m z_{k,m}\right)_i \Lambda_i^m\|^2 = \mu_{k,m}^2\| \sum_{i=1}^m (z_{k,m})_i \Lambda_i^m\|^2\\\notag
		=&~\mu_{k,m}^2\sum_{i,i'=1}^m (z_{k,m})_i(z_{k,m})_{i'} \left(\Lambda_i^m,\Lambda_{i'}^m\right) = \mu_{k,m}^2\frac{1}{6(m+1)}\sum_{i=1}^m (z_{k,m})_i\left(R_m z_{k,m}\right)_i\\\label{singe2}
		=&~\mu_{k,m}^2\frac{4+2\cos\left(\frac{\sqrt{\lambda_k}}{m+1}\right)}{6(m+1)}.
	\end{align}
	Putting \eqref{singe1} and \eqref{singe2} together, using $\sin(2x)=2\sin(x)\cos(x)$ and $\cos(2x)=1-2\sin^2(x)$, then yields the explicit formulas for the eigenvalues $\sigma_{k,m}$ and the left singular functions $v_{k,m}$. Finally, we calculate the right singular vectors $u_{k,m}$:
\begin{align*}
	(u _{k,m})_j&= \frac{1}{\sigma_{k,m}}  (K_m v_{k,m})(\xi_{j,m}) = \frac{1}{\sigma_{k,m}} \sum_{l=1}^m (z_{k,m})_l (K_m \kappa(\xi_{l,m},\cdot))(\xi_{j,m})\\
	&
	=\frac{1}{\sigma_{k,m}}	\sum_{l=1}^m (T_m)_{j,l} (z_{k,m})_l = (z_{k,m})_j = \sqrt{\frac{2}{m+1}} \sin(k\pi \xi_{j,m}).
\end{align*}
\end{proof}
\section{Proof of claim \ref{claim:sv}}\label{app2}
\begin{proof}[Proof of claim \ref{claim:sv}]
		We need the following auxiliary identity: For $m\in\N, t\in\N_0 $ and $k\in\{1,...,m\}, s\in\{0,...,m\}$ and $j=t(m+1)+s$ there holds
		\begin{align}\label{p1:e1a}
		\sum_{l=1}^m \sin\left(\frac{j \pi l}{m+1}\right) \sin\left(\frac{ k \pi l}{m+1}\right)=\begin{cases} \frac{m+1}{2} &\quad \mbox{for }s=k~\mbox{and}~t~\mbox{even}  \\
			-\frac{m+1}{2} &\quad \mbox{for }s+k=m+1~\mbox{and}~t~\mbox{odd}\\
			0 &\quad \mbox{else}\end{cases}.
	\end{align}
		We first prove the claim. With the polar identity we obtain
	\begin{align*}
		\sum_{l=1}^m \sin\left(\frac{j \pi l}{m+1}\right) \sin\left(\frac{ k \pi l}{m+1}\right)
		= \frac{1}{4}\sum_{l=1}^m\left(q_2^l+q_2^{-l}-(q_1^l+q_1^{-l})\right),
	\end{align*}
    where $q_1=\exp\left(i(j+k)\pi/(m+1)\right)$ and $q_2=\exp\left(i(j-k)\pi/(m+1)\right)$.
	For $q\in\{q_1,q_2\}$, if $q\neq 0,1$, if holds that
	\begin{align*}
		\sum_{i=1}^m (q^i + q^{-i})&= -1 + \frac{q^{m+\frac{1}{2}} - q^{-\left(m+\frac{1}{2}\right)}}{q^\frac{1}{2}-q^{-\frac{1}{2}}}= -1 + (-1)^{k+j}(-1)=-(1+(-1)^{k+j})
				\end{align*}
	since $q^{m+\frac{1}{2}} =(-1)^{k+j}q^{-\frac{1}{2}}$. If $t$ is even and
	$s=k$, then $j-k=t(m+1)$ which implies that $q_2=1$, while, since
	$0<2k<2(m+1)$, the sum $j+k=t(m+1)+2k$ cannot be a multiple of $2(m+1)$,
	therefore $q_1\neq 0,1$ and thus, since $j+k$ is even, $\sum_{l=1}^m
	\sin\left(\frac{j \pi l}{m+1}\right) =\frac{m+1}{2}$. Similarly, if $t$ is odd
	and $s+k=m+1$, then $j+k=(t+1)(m+1)$ implies  $q_1=1$, and now
	$j-k=t(m+1)+s-k=(t+1)(m+1)-2k$ is not a multiple of $2(m+1)$, which yields
	$q_2\neq 0,1$. Since $j+k$ is again even we deduce		$\sum_{l=1}^m
	\sin\left(\frac{j \pi l}{m+1}\right) \sin\left(\frac{ k \pi
	l}{m+1}\right)=-\frac{m+1}{2}$.	In any other case it holds that $q_1,q_2\neq
	0,1$ and therefore $\sum_{l=1}^m \sin\left(\frac{j \pi l}{m+1}\right)
	\sin\left(\frac{ k \pi l}{m+1}\right)=0$, which finishes the proof of the claim
	\eqref{p1:e1a}. We come to the proof of the proposition. As above we can write
	$j=t(m+1)+s$ with $t\in\N_0$ and $s\in\{0,...,m\}$. Using the claim
	\eqref{p1:e1a} together with
	\begin{align*}
		&\sigma_{j,m}\frac{m+1}{2}(v_k,v_{j,m}) = \left(\sin(\sqrt{\lambda_k} \cdot),\sum_{l=1}^m\sin\left(\sqrt{\lambda_j}\xi_l)\kappa(\xi_{l,m},\cdot)\right)\right)\\
		 =~ &\sum_{l=1}^m\sin\left(\sqrt{\lambda_j}\xi_l\right) \left(\sin(\sqrt{\lambda_k}\cdot),\kappa(\xi_{l,m},\cdot)\right)=\sigma_k\sum_{l=1}^m\sin\left(\sqrt{\lambda_j}\xi_l\right)\sin\left(\sqrt{\lambda_k}\xi_l\right)
	\end{align*}
	concludes the proof.
\end{proof}
\bibliographystyle{iopart-num}
\bibliography{references}
\end{document}